\newtheorem{theorem}{Theorem}[section] 
\newtheorem{proposition}[theorem]{Proposition} 
\newtheorem{corollary}[theorem]{Corollary} 
\newtheorem{lemma}[theorem]{Lemma} 
\theoremstyle{definition} 
\newtheorem{example}[theorem]{Example} 
\newtheorem{examples}[theorem]{Examples}
\newtheorem{remark}[theorem]{Remark} 
\newtheorem{remarks}[theorem]{Remarks}
\newcommand{\CC}{{\mathbb C}} 
\newcommand{\NN}{{\mathbb N}}
\newcommand{\cA}{{\mathcal A}} 
\newcommand{\cB}{{\mathcal B}} 
\newcommand{\cC}{{\mathcal C}} 
\newcommand{\cE}{{\mathcal E}} 
\newcommand{\cF}{{\mathcal F}} 
\newcommand{\cG}{{\mathcal G}} 
\newcommand{\cH}{{\mathcal H}}
\newcommand{\cK}{{\mathcal K}} 
\newcommand{\cL}{{\mathcal L}}
\newcommand{\cR}{{\mathcal R}} 
\newcommand{\cS}{{\mathcal S}}
\newcommand{\cV}{{\mathcal V}} 
\newcommand{\cZ}{{\mathcal Z}}
\newcommand{\fk}{\mathbf{k}}
\newcommand{\fl}{\mathbf{l}}
\newcommand{\Ra}{\Rightarrow}
\newcommand{\ra}{\rightarrow} 
\newcommand{\ol}{\overline}
\newcommand{\tr}{\operatorname{tr}} 
\let\phi=\varphi 
\newcommand{\iac}{\mathrm{i}}
\newcommand{\lin}{\operatorname{Lin}}
\newcommand{\supp}{\operatorname{supp}}
\newcommand{\nr}[1]{\vspace{0.1ex}\noindent\hspace*{12mm}\llap{\textup{(#1)}}} 
\begin{document} 
\title[Invariant Kernels with Values Adjointable Operators I.]{Representations 
of $*$-semigroups  associated to invariant kernels with values adjointable 
operators. I}\thanks{The 
second named author's work supported by a grant of the Romanian 
National Authority for Scientific Research, CNCS  UEFISCDI, project number
PN-II-ID-PCE-2011-3-0119.}
 
 \date{\today}
 
 \author[S. Ay]{Serdar Ay}
 \address{Department of Mathematics, Bilkent University, 06800 Bilkent, Ankara, 
Turkey}
 \email{serdar@fen.bilkent.edu.tr}
 
\author[A. Gheondea]{Aurelian Gheondea} 
\address{Department of Mathematics, Bilkent University, 06800 Bilkent, Ankara, 
Turkey, \emph{and} Institutul de Matematic\u a al Academiei Rom\^ane, C.P.\ 
1-764, 014700 Bucure\c sti, Rom\^ania} 
\email{aurelian@fen.bilkent.edu.tr \textrm{and} A.Gheondea@imar.ro} 

\begin{abstract} We consider positive semidefinite 
kernels valued in the $*$-algebra of adjointable operators on a VE-space
(Vector Euclidean space) and that are
invariant under actions of $*$-semigroups. 
A rather general dilation theorem is stated and proved: for these kind of 
kernels, representations of the $*$-semigroup on either the
VE-spaces of linearisation of the kernels or on their reproducing kernel
VE-spaces are obtainable. 
We point out the reproducing kernel fabric of dilation theory and
we show that the general theorem unifies many dilation results at the 
non topological level.
\end{abstract} 

\subjclass[2010]{Primary 47A20; Secondary 15B48, 43A35, 46E22, 46L89}
\keywords{ordered $*$-space, VE-space, 
positive semidefinite kernel, $*$-semigroup, invariant kernel, linearisation, 
$*$-semigroup, $*$-representation, reproducing kernel, ordered $*$-algebra, 
VE-module.}
\maketitle 

\section*{Introduction}

Starting with the celebrated Naimark's dilation theorems in \cite{Naimark1}
and \cite{Naimark2}, a powerful dilation theory for operator  valued maps
was obtained through 
results of B.~Sz.-Nagy \cite{BSzNagy}, W.F.~Stinespring 
\cite{Stinespring}, and their generalisations to VH-spaces 
(Vector Hilbert spaces) by R.M.~Loynes \cite{Loynes1}, 
or to Hilbert $C^*$-modules by G.G.~Kasparov \cite{Kasparov}. 
Taking into account the diversity of dilation theorems for operator valued
maps, there is a natural question, whether one can
unify all, or the most, of these dilation theorems, under one theorem. An
attempt to approach this question was made in \cite{Gheondea} by using
the notion of VH-space over an admissible space, introduced by 
R.M.~Loynes \cite{Loynes1}, \cite{Loynes2}. 
As a second step in this enterprise, an investigation at the "ground level", 
that is, a non topological approach, makes perfectly sense. In addition,
an impetus to pursue this way was given to us by the recent investigation on 
closely related problems, e.g.\ non topological theory 
for operator spaces and operator systems, cf.\ \cite{PaulsenTomforde}, 
\cite{EsslamzadehTaleghani}, \cite{PaulsenTodorovTomforde}, 
\cite{EsslamzadehTurowska}.

The aim of this article is to present a general non topological approach 
to dilation theory based on positive semidefinite kernels that are 
invariant under actions of $*$-semigroups and with values adjointable 
operators on VE-spaces (Vector Euclidean spaces) over ordered $*$-spaces.
More precisely, we show that at the level of conjunction of order with
$*$-spaces or $*$-algebras and operator valued maps, 
one can obtain a reasonable dilation theory that 
contains the fabric of most of the more or less topological 
versions of classical dilation theorems. 
In addition, we integrate into non topological dilation theory, on equal foot,
the reproducing kernel technique and show that almost each dilation theorem 
is equivalent to a realisation as a reproducing kernel space with additional
properties. Our approach is based on ideas already 
present under different topological versions of dilation theorems in 
\cite{ParthasarathySchmidt}, \cite{EvansLewis}, \cite{Loynes1}
\cite{ConstantinescuGheondea1}, \cite{ConstantinescuGheondea2}, 
\cite{Murphy}, \cite{GasparGaspar}, 
\cite{GasparGaspar2}, \cite{GasparGaspar1}, \cite{Szafraniec}, 
\cite{Heo}, \cite{Gheondea} and, probably, many others.

We briefly describe the contents of this article. In Section~\ref{s:p} 
we fix some terminology
and facts on ordered $*$-spaces, ordered $*$-algebras, VE-spaces over 
ordered $*$-spaces, and VE-modules over ordered $*$-algebras. On these
basic objects, one can build the ordered $*$-algebras of adjointable operators 
on VE-spaces or VE-modules. We provide many examples that illustrate the
richness of this theory, even at the non topological level.

Then, in Section~\ref{s:lik}, we consider the main object of investigation which
refers to positive semidefinite kernels with values adjointable operators 
on VE-spaces. We make a preparation by showing that, although analogs of 
Schwarz Inequality are missing at this level of generality, 
some basic results can be obtained by different techniques.
In order to achieve a sufficient generality that allows to recover known 
dilation theorems for both $*$-semigroups (B.~Sz.-Nagy) and $*$-algebras 
(Stinespring), in view of \cite{Gheondea}, we consider positive 
semidefinite kernels that are invariant under actions of 
$*$-semigroups and that have values adjointable operators on VE-spaces. In
Lemma~\ref{l:twopos} we show that, for a $2$-positive kernel, 
if boundedness in the sense of
Loynes is assumed for all the operators on the diagonal, then the entire kernel 
is made up by bounded operators. In this way we explain how the investigation
of this article is situated with respect to that in \cite{Gheondea}.
We briefly show the connection between linearisations and reproducing kernel
spaces at this level of generality.
It is this stage when we are able to state and prove the main result, 
Theorem~\ref{t:vhinvkolmo} 
that, basically, shows that this kind of kernels produce $*$-representations
on "dilated" VE-spaces that linearise the kernel or, equivalently, on 
reproducing kernel VE-spaces that can be explicitly described.

Finally, in Section~\ref{s:rtu}, as consequences of 
Theorem~\ref{t:vhinvkolmo}, we show how non topological
versions of most of the known dilation theorems \cite{BSzNagy}, 
\cite{Stinespring}, \cite{Loynes1}, \cite{Kasparov}, \cite{Joita}
can be obtained. 
On the other hand, in order to unify the known dilation theorems in 
topological versions, one needs certain topological structures on 
ordered $*$-spaces 
and VE-spaces, that lead closely to the VH-spaces over admissible spaces, 
as considered in \cite{Loynes1}. This way was followed, to a certain extent, 
in \cite{Gheondea} but, in order to obtain a sufficiently large generality 
allowing to cover most of the known topological dilation theory, 
one needs more flexibility by moving beyond bounded operators in the sense of 
Loynes. 
We will consider this in subsequent articles.
 
\section{Preliminaries}\label{s:p}

In this section we briefly review most of the definitions and some basic
facts on VE-spaces over ordered $*$-spaces, inspired by cf.\
R.M. Loynes, \cite{Loynes1}, \cite{Loynes2}, and \cite{Loynes3}. 
We slightly modify some definitions in order to match the requirements of 
this investigation, notably by separating the non topological from the 
topological cases and by adhering to the convention, that is very popular
in Hilbert $C^*$-modules, to let gramians be linear in the second variable and
conjugate linear in the first variable, for consistency.

\subsection{Ordered $*$-Spaces.}\label{ss:oss}
A complex vector space $Z$ is called \emph{ordered $*$-space},
c.f.\ \cite{PaulsenTomforde}, if:
\begin{itemize}
\item[(a1)] $Z$ has an \emph{involution} $*$, that is, a map 
$Z\ni z\mapsto z^*\in Z$ 
that is \emph{conjugate linear} 
(($s x+t y)^*=\ol s x^*+\ol t y^*$ for all 
$s,t\in\CC$ and all $x,y\in Z$) and \emph{involutive} 
($(z^*)^*=z$ for all $z\in Z$). 
\item[(a2)] In $Z$ there is a \emph{cone} $Z^+$ ($s x+t y\in Z^+$ 
for all numbers $s,t\geq 0$ and all $x,y\in Z^+$), that is  
\emph{strict} ($Z^+\cap -Z^+=\{0\}$), and consisting of 
\emph{self-adjoint elements} 
only  ($z^*=z$ for all 
$z\in Z^+$). This cone is used to define a \emph{partial order} on the real 
vector space of all selfadjoint elements in $Z$: 
$z_1  \geq z_2$ if $z_1-z_2 \in Z^+$.
\end{itemize}

Recall that a \emph{$*$-algebra} $\cA$ is a complex algebra onto which 
there is defined an \emph{involution} $\cA\ni a\mapsto a^*\in\cA$, that is, 
$(\lambda a+\mu b)^*=\overline \lambda a^*+\overline \mu b^*$, 
$(ab)^*=b^*a^*$, and $(a^*)^*=a$, for all $a,b\in\cA$ and all 
$\lambda,\mu\in\CC$. 

An \emph{ordered $*$-algebra} $\cA$ is a $*$-algebra 
such that it is an ordered $*$-space, more precisely, it has the following
property.
\begin{itemize}
\item[(osa1)] There exists a strict cone $\cA^+$ in $\cA$ such that for any 
$a\in\cA^+$ we have $a=a^*$.
\end{itemize}
Clearly, any ordered $*$-algebra is an ordered $*$-space. In particular, 
given $a\in\cA$, we denote $a\geq 0$ if $a\in\cA^+$ and, for 
$a=a^*\in\cA$ and $b=b^*\in\cA$, we denote $a\geq b$ if $a-b\geq 0$.

\begin{remark}\label{r:isa} In analogy with the case of $C^*$-algebras, given 
a $*$-algebra $\cA$, one defines an element $a\in\cA$ to be 
\emph{$*$-positive} 
if $a=\sum_{k=1}^n a_k^*a_k$ for some natural number 
$n$ and some elements $a_1,\ldots,a_n\in \cA$. The collection of 
all $*$-positive elements in a $*$-algebra is 
a cone, but it may fail to be strict and hence, associated is only a 
\emph{quasi-order}, e.g.\ see \cite{EsslamzadehTaleghani} for a recent 
investigation. Thus, our definition of an ordered $*$-algebra 
specifies a strict cone $\cA^+$ and, in general, it does not refer to the cone of 
$*$-positive elements as defined above, except special cases as, for
example, pre $C^*$-algebras or pre locally $C^*$-algebras. 
\end{remark}

\begin{examples}\label{ex:eos}
(1) Any $C^*$-algebra, e.g.\ see \cite{Arveson}, 
$\cA$ is an ordered $*$-algebra and
any $*$-subspace $\cS$ of a $C^*$-algebra $\cA$, with the 
positive cone $\cS^+=\cA^+\cap\cS$ and all other operations (addition, 
multiplication with scalars, and involution) inherited from $\cA$, is a 
$*$-space.

(2) Any pre-$C^*$-algebra is an ordered $*$-algebra. Any 
$*$-subspace $\cS$ of a pre-$C^*$-algebra $\cA$ is an ordered 
$*$-space, with the positive cone $\cS^+=\cA^+\cap\cS$ and all other 
operations inherited from $\cA$.

(3) Any locally $C^*$-algebra, see \cite{Inoue}, \cite{Phillips}, 
is an ordered $*$-algebra. 
In particular, any $*$-subspace $\cS$ of a locally $C^*$-algebra $\cA$, 
with the cone $\cS^+=\cA^+\cap\cS$ and all other operations inherited from 
$\cA$, is an ordered $*$-space. 

(4) Any locally pre-$C^*$-algebra is an ordered $*$-algebra. Any
$*$-subspace $\cS$ of a locally pre-$C^*$-algebra is an ordered
$*$-space, with $\cS^+=\cA^+\cap\cS$ and all other operations inherited from 
$\cA$.

(5) Let $\cH$ be an infinite dimensional separable Hilbert space and let
$\cC_1$ be 
the trace-class ideal, that is, the collection of all linear bounded operators
$A$ 
on $\cH$ such that $\tr(|A|)<\infty$. $\cC_1$ is a $*$-ideal of $\cB(\cH)$. 
Positive elements in $\cC_1$ are defined in the sense 
of positivity in $\cB(\cH)$. $\cC_1$ is an ordered $*$-space.

(6) Let $V$ be a complex vector space and let
$V^\prime$ be its conjugate dual space. On the vector space 
$\mathcal{L}(V,V^\prime)$ of all linear
operators $T\colon V\rightarrow V^\prime$, a natural notion of positive 
operator can be defined: $T$ is \emph{positive} if $(Tv)(v)\geq 0$ for all 
$v\in V$. Let
$\mathcal{L}^+(V,V^\prime)$ be the collection of all positive operators  and 
note that it is a strict cone. The involution $*$ 
in $\mathcal{L}(V,V^\prime)$ is defined in the following way: for any
$T\in\mathcal{L}(V,V^\prime)$, $T^*=T^\prime|V$, that is, the restriction to 
$V$ of the dual operator 
$T^\prime\colon V^{\prime\prime}\rightarrow V^\prime$. With respect to 
the cone $\mathcal{L}^+(V,V^\prime)$ and the 
involution $*$ just defined, $\mathcal{L}(V,V^\prime)$ becomes an ordered 
$*$-space. 
See A.~Weron \cite{Weron}, as well as D.~Ga\c spar and P.~Ga\c spar 
\cite{GasparGaspar}.

(7) Let $X$ be a nonempty set and denote by $\cK(X)$ the collection of 
all complex valued kernels on $X$, that is, $\cK(X)=\{k\mid k\colon X\times 
X\rightarrow \CC\}$, considered as a complex vector space with the 
operations of addition and multiplication of scalars defined elementwise. 
An involution 
$*$ can be defined on $\cK(X)$ as follows: $k^*(x,y)=\overline{k(y,x)}$, for all 
$x,y\in X$ and all $k\in\cK(X)$. The cone $\cK(X)^+$ consists on all 
\emph{positive semidefinite} kernels, that is, those kernels $k\in\cK(X)$ with 
the property that, for any $n\in\NN$ and any $x_1,\ldots,x_n\in X$, the 
complex matrix $[k(x_i,x_j)]_{i,j=1}^n$ is positive semidefinite. 

On $\cK(X)$ a multiplication can be defined elementwise: if $k,l\in\cK(X)$
then $(kl)(x,y)=k(x,y)l(x,y)$ for all $x,y\in X$. With respect to this multiplication
and the other operations described before, $\cK(X)$
is an ordered $*$-algebra.

Using the notion of \emph{Schur product}, e.g.\ see \cite{Paulsen},
it can be proven that the ordered 
$*$-algebra $\cK(X)$ has the following property: if $k,l\in\cK(X)$ are positive
semidefinite kernels, then $kl$ is positive semidefinite. However, this is a 
case that illustrates Remark~\ref{r:isa}: it is not true, in general, that kernels
of type $k^*k$ are positive semidefinite.

(8) Let $\cA$ and $\cB$ be two ordered $*$-spaces. In addition, we assume 
that the specified strict cone $\cA^+$ linearly generates $\cA$. On 
$\cL(\cA,\cB)$, the vector space of all linear maps $\phi\colon \cA\ra\cB$, 
we define an involution: $\phi^*(a)=\phi(a^*)^*$, for all $a\in\cA$. A linear 
map $\phi\in\cL(\cA,\cB)$ is called positive if $\phi(\cA^+)\subseteq \cB^+$. It
is easy to see that $\cL(\cA,\cB)^+$, the collection of all positive maps from 
$\cL(\cA,\cB)$, is a cone, and that it is strict because $\cA^+$ 
linearly generates $\cA$. In addition, any $\phi\in\cL(\cA,\cB)^+$ 
is selfadjoint, 
again due to the fact that $\cA^+$ linearly generates $\cA$. Consequently,
$\cL(\cA,\cB)$ has a natural structure of ordered $*$-space.

(9) Let $\{Z_\alpha\}_{\alpha\in A}$ be a family of ordered $*$-spaces such that,
for each $\alpha\in A$, $Z_\alpha^+$ is the specified strict cone of positive
elements in $Z_\alpha$. 
On the product space $Z=\prod_{\alpha\in A}Z_\alpha$ let 
$Z^+=\prod_{\alpha\in A}Z_\alpha^+$ and observe that $Z^+$ is a
strict cone. Letting the involution $*$ on $Z$ be defined elementwise, it
follows that $Z^+$ consists on selfadjoint elements only. In this way, $Z$
is an ordered $*$-space.

(10) Let $Z$ be an ordered $*$-space with the specified strict cone $Z^+$. 
A subspace $J$ of $Z$ is called an
\emph{order ideal} if it is \emph{selfadjoint}, that is, 
$J=J^*=\{z^*\mid z\in J\}$, and \emph{solid}, that is, for any 
$z\in J\cap Z^+$ and any $y\in Z^+$
such that $y\leq z$ it follows $y\in J$. Then, on the quotient vector space 
$Z/J$, an
involution $*$ can be defined by: $(z+J)^*=z^*+J$, for $z\in Z$. Also, letting
$(Z/J)^+=\{z+J\mid z\in Z^+\}$, it follows that $(Z/J)^+$ is a strict cone in
$Z/J$ consisting of selfadjoint elements only and, hence, $Z/J$ is an ordered
$*$-space. See \cite{PaulsenTomforde}.
\end{examples}

\subsection{Vector Euclidean Spaces and Their Linear Operators.}\label{ss:ves}
Given a complex linear space $\cE$ and an
ordered $*$-space space $Z$, a \emph{$Z$-gramian}, also called a 
\emph{$Z$-valued inner product}, is a mapping  
$\cE\times \cE\ni (x,y) \mapsto [x,y]\in Z$ subject to 
the following properties:
\begin{itemize}
\item[(ve1)] $[x,x] \geq 0$ for all $x\in \cE$, and $[x,x]=0$ if and only if 
$x=0$.
\item[(ve2)] $[x,y]=[y,x]^*$ for all $x,y\in\cE$.
\item[(ve3)] $[x,\alpha y_1+\beta y_2]=\alpha 
[x,y_1]+\beta [x,y_2]$ for all $\alpha,\beta\in \mathbb{C}$ and 
all $x_1,x_2\in \cE$.
\end{itemize}

A complex linear space $\cE$ onto which a $Z$-gramian 
$[\cdot,\cdot]$ is specified, for a 
certain ordered $*$-space $Z$, is called a \emph{VE-space} 
(Vector Euclidean space) over $Z$, cf.\ \cite{Loynes1}. 

\begin{remark}\label{r:polar}
In any VE-space $\cE$ over an ordered $*$-space $Z$, the familiar 
\emph{polarisation formula}
\begin{equation}\label{e:polar} 4[x,y]=\sum_{k=0}^3 \iac^k 
[(x+\iac^k y,x+\iac^k y],\quad x,y\in \cE,
\end{equation} holds, which shows that the $Z$-valued inner product is 
perfectly defined by the $Z$-valued quadratic map 
$\cE\ni x\mapsto [x,x]\in Z$.

Actually, the formula \eqref{e:polar} is more general: given a pairing 
$[\cdot,\cdot]\colon \cE\times \cE\ra Z$, where $\cE$ is some vector space
and $Z$ is a $*$-space, and assuming that $[\cdot,\cdot]$ satisfies only the
axioms (ve2) and (ve3), then \eqref{e:polar} is still valid.
\end{remark}

The concept of \emph{VE-spaces isomorphism} is also naturally defined: 
this is just a linear bijection $U\colon \cE\ra \cF$, for two VE-spaces 
over the same ordered $*$-space $Z$, which is \emph{isometric}, that is, 
$[Ux,Uy]_\cF=[x,y]_\cE$ for all $x,y\in \cE$.

In general VE-spaces, an analog of the Schwarz Inequality may not hold but 
some of its consequences can be proven using slightly different 
techniques. One such method is provided by the following lemma.

\begin{lemma}[Loynes \cite{Loynes1}]\label{l:sesqui} 
Let $Z$ be an ordered $*$-space, $\cE$ a complex vector space and
$[\cdot,\cdot]\colon \cE\times \cE \rightarrow Z$ a positive semidefinite 
sesquilinear map, 
that is, $[\cdot,\cdot]$ is linear in the second variable, conjugate linear 
in the first variable, and $[x,x]\geq 0$ for all $x\in \cE$. 
If $f\in \cE$ is such that $[f,f]=0$, then $[f,f']=[f',f]=0$ for all 
$f' \in \cE$.
\end{lemma}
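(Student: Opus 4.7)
The strategy is to apply positive semidefiniteness to the linear combinations $f+\lambda f'$ as $\lambda$ varies over $\CC$, and to squeeze the resulting family of $Z$-inequalities until they force $[f,f']=0$. First I would fix $f'\in\cE$ and set $a=[f,f']$, $a'=[f',f]$, $b=[f',f']\in Z^+$. Expanding with sesquilinearity and using $[f,f]=0$, one obtains
\[
0\le[f+\lambda f',f+\lambda f']=\lambda\, a+\bar\lambda\, a'+|\lambda|^2\, b\qquad\text{for every }\lambda\in\CC.
\]
Writing $\lambda=t\emath^{\iac\theta}$ with $t\in\RR$ and $\theta\in\RR$, and putting $c_\theta=\emath^{\iac\theta}a+\emath^{-\iac\theta}a'\in Z$, the inequality rearranges to $t\, c_\theta+t^2\, b\ge 0$ for all $t\in\RR$ and all $\theta\in\RR$.

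Next I would exploit both signs of $t$: specialising $t=\pm 1/n$ for $n\in\NN$ and multiplying by the positive scalar $n$, I get
\[
c_\theta+\frac{1}{n}\, b\ge 0\quad\text{and}\quad -c_\theta+\frac{1}{n}\, b\ge 0,
\]
valid for every $n\in\NN$ and every $\theta\in\RR$; in other words, $c_\theta$ and $-c_\theta$ are both dominated by $b/n$ for arbitrarily large $n$. The hard part --- and what I expect to be the main obstacle --- is to pass from this squeeze to the equality $c_\theta=0$. At the level of pure order on $Z$ this is an Archimedean-type step: knowing $\pm c_\theta\le b/n$ for every $n\in\NN$ should, together with the strictness $Z^+\cap(-Z^+)=\{0\}$, corner $c_\theta$ into $Z^+\cap(-Z^+)$ and hence into $\{0\}$. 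In the topological setting of \cite{Loynes1} the analogous passage uses closedness of the positive cone in an admissible topology; in the purely non-topological setting of the paper it must be handled algebraically, and this is precisely the delicate point.

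Granted $c_\theta=0$ for every $\theta\in\RR$, I would specialise $\theta=0$ to get $a+a'=0$ and $\theta=\pi/2$ to get $\iac(a-a')=0$, i.e.\ $a=a'$. Adding these two equations yields $2a=0$, hence $a=a'=0$, which is exactly $[f,f']=0=[f',f]$, completing the proof.
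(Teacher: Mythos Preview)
You have correctly isolated the crux: the passage from
\[
-\tfrac{1}{n}\,b \le c_\theta \le \tfrac{1}{n}\,b \quad\text{for all }n\in\NN
\]
to $c_\theta=0$ is an Archimedean step, and the axioms (a1)--(a2) for an ordered $*$-space impose \emph{only} that $Z^+$ be a strict cone of selfadjoint elements. Strictness alone does not yield the Archimedean property, so your proof has a genuine gap at exactly the point you flagged.

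In fact, this gap cannot be closed in the stated generality: the lemma is false without an Archimedean-type hypothesis. Take $Z=\CC^2$ with componentwise conjugation as involution and with $Z^+$ the lexicographic cone on $\RR^2$, i.e.\ $(x,y)\in Z^+$ iff $x>0$, or $x=0$ and $y\ge 0$; this cone is strict and consists of selfadjoint elements, so $Z$ is an ordered $*$-space in the sense of the paper. On $\cE=\CC^2$ set
\[
[v,w]=\bigl(\bar v_1 w_1,\ \bar v_1 w_2+\bar v_2 w_1\bigr).
\]
This is sesquilinear, and $[v,v]=(|v_1|^2,\,2\Re(\bar v_1 v_2))\in Z^+$ for every $v$. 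With $f=(0,1)$ one has $[f,f]=(0,0)=0$, yet for $f'=(1,0)$ one computes $[f,f']=(0,1)\ne 0$. One also checks directly that $c_0=(0,2)$ satisfies $-\tfrac{1}{n}b\le c_0\le \tfrac{1}{n}b$ for $b=(1,0)$ and every $n$, illustrating precisely how your squeeze fails.

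Note that the paper does not supply its own proof of the lemma; it cites Loynes \cite{Loynes1}, whose setting is that of \emph{admissible} spaces, where $Z^+$ is closed in a suitable topology and the limiting step in your argument is legitimate. Transplanting the statement verbatim to the purely non-topological framework of the paper requires an extra order-theoretic hypothesis (Archimedean cone, or closedness in some topology) that is not among (a1)--(a2). Your instinct that ``this is precisely the delicate point'' is exactly right.
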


Given two VE-spaces $\cE$ and $\cF$, over the same ordered $*$-space 
$Z$, one can consider the vector space $\cL(\cE,\cF)$ of all
linear operators $T\colon \cE\ra\cF$. 
A linear operator $T\in\cL(\cE,\cF)$ is 
called \emph{adjointable} if there exists $T^*\in\cL(\cF,\cE)$ such that
\begin{equation}\label{e:adj} [Te,f]_\cF=[e,T^*f]_\cE,\quad e\in\cE,\ f\in\cF.
\end{equation} The operator $T^*$, if it exists, is uniquely determined by $T$ 
and called its \emph{adjoint}.
Since an analog of the Riesz Representation Theorem for VE-spaces
may not exist, in general, 
there may be not so many adjointable operators. Denote by 
$\cL^*(\cE,\cF)$ the vector space of all adjointable operators from 
$\cL(\cE,\cF)$.
Note that $\cL^*(\cE)=\cL^*(\cE,\cE)$ 
is a $*$-algebra with respect to the involution $*$ 
determined by the operation of taking the adjoint. 

An operator $A\in\cL(\cE)$ is called \emph{selfadjoint} if
$ [Ae,f]=[e,Af]$, for all $e,f\in \cE$.
Clearly, any selfadjoint operator $A$ is adjointable and 
$A=A^*$.
By the polarisation formula \eqref{e:polar}, $A$ is selfadjoint if and only if
$ [Ae,e]=[e,Ae]$, $ e\in\cE$.
An operator $A\in\cL(\cE)$ is \emph{positive} if
\begin{equation}\label{e:pos} [Ae,e]\geq 0,\quad  e\in\cE.\end{equation}
Since the cone $Z^+$ consists of selfadjoint elements only, 
any positive operator is selfadjoint and hence adjointable. 
On the other hand, note that any VE-space isomorphism
is adjointable and hence, it makes sense to call it \emph{unitary}.

\begin{examples}\label{ex:les}
(1) If $\cE$ is some VE-space over an ordered $*$-space $Z$, then $\cL^*(\cE)$
is an ordered $*$-algebra, where the cone of positive elements is defined 
by \eqref{e:pos}. Note that this cone is strict.
In connection with Remark~\ref{r:isa}, note that any operator $A\in\cL^*(\cE)$
that can be represented $A=\sum_{j=1}^N A_j^*A_j$ is positive, but the 
converse, in general, is not true.

(2) With notation as in Example~\ref{ex:eos}.(5), consider $\cC_2$ the ideal of
Hilbert-Schmidt 
operators on $\cH$. Then $[A,B]=A^*B$, for all $A,B\in\cC_2$, is a gramian
with values in the  
ordered $*$-space $\cC_1$ with respect to which $\cC_2$ becomes a
VE-space. More abstract versions of this example have been 
considered by Saworotnow in \cite{Saworotnow}.

(3) Let $\{\cE_\alpha\}_{\alpha\in A}$ be a family of VE-spaces such that, for each
$\alpha\in A$, $\cE_\alpha$ is a VE-space over the ordered $*$-space
$Z_\alpha$. Consider the ordered $*$-space $Z=\prod_{\alpha\in A}Z_\alpha$ as
in Example~\ref{ex:eos}. Consider the vector space $\cE=\prod_{\alpha\in A}
\cE_\alpha$ on which we define
\begin{equation*}[(e_\alpha)_{\alpha\in A},(f_\alpha)_{\alpha\in
      A}]=([e_\alpha,f_\alpha])_{\alpha\in A}\in Z,\quad (e_\alpha)_{\alpha\in A},(f_\alpha)_{\alpha\in
      A}\in \cE. 
\end{equation*} Then $\cE$ is a VE-space over $Z$.

(4)  Given a finite collection of VE-spaces $\cE_1,\dots,\cE_N$, 
over the same ordered $*$-space $Z$, one can define naturally the VE-space
$\cE_1\oplus\cdots\oplus \cE_N$ over $Z$ where, for $e_j,f_j\in\cE_j$, 
$j=1,\ldots,N$ we define
\begin{equation*}[e_1\oplus\cdots\oplus e_N,f_1\oplus\cdots f_N]
=\sum_{j=1}^N [e_j,f_j].
\end{equation*}
We use the notation $\cE^N$ for $\cE_1\oplus\cdots\oplus\cE_N$ 
when $\cE=\cE_j$ for all $j=1,\ldots,N$. Then observe that $\cL^*(\cE^N)$ 
can be naturally identified with $M_N(\cE)$, the space of all $N\times N$ 
matrices with entries in $\cL^*(\cE)$. This identification provides a natural 
structure of ordered $*$-algebra of $\cL^*(\cE^N)$ over $Z$, with an even  
richer structure, see Remarks~\ref{r:cp}.

(5) Let $\cH$ be a pre-Hilbert space having an orthonormal basis and $\cE$ a
VE-space over the ordered $*$-space $Z$. On the algebraic tensor product
$\cH\otimes \cE$ define a gramian by
\begin{equation*} [h\otimes e,l\otimes f]_{\cH\otimes\cE}=\langle
  h,l\rangle_\cH [e,f]_\cE\in Z,\quad
  h,l\in \cH,\ e,f\in \cE,
\end{equation*} and then extend it to $\cH\otimes \cE$ by linearity. By a
standard but rather long argument, e.g.\ similar to \cite{Lance} p.~6, 
it can be proven that, in this way, $\cH\otimes \cE$ becomes a VE-space over 
$Z$ as well.

If $\cH=\CC^n$ for some $n\in\NN$ then, with notation as in item (5), 
it is clear that 
$\CC^n\otimes\cE$ is isomorphic with $\bigoplus_{j=1}^n \cE_j$, with
$\cE_j=\cE$ for all $j=1,\ldots,n$.
\end{examples} 

An operator $A\in\cL(\cE,\cF)$, for two VE-spaces over the same ordered
$*$-space $Z$, is called \emph{bounded in the sense of Loynes}, or simply
\emph{bounded}, if, for some $\mu\geq 0$,
\begin{equation}\label{e:opnorm} [Ah,Ah]_\cF\leq \mu[h,h]_\cE,\quad h\in\cE.
\end{equation}
We denote the class of bounded operators by $\mathcal{B}(\cE,\cF)$. 
For a bounded operator $A\in\cB(\cE,\cF)$, its \emph{operator norm} is denoted
by $\|A\|$ and it is defined by
 square root of the least $\mu\geq 0$ satisfying \eqref{e:opnorm}, that is,
 \begin{equation}\label{e:opnormak}
 \|A\|=\inf\{\sqrt{\mu}\mid \mu\geq 0,\ [Ah,Ah] \leq \mu[h,h],\mbox{ for all } 
h \in \cH\}. 
 \end{equation} It is easy to see that the infimum is actually a 
minimum and hence, that we have
\begin{equation}\label{e:opnorminequa}
 [Ah,Ah] \leq \|A\|^2 [h,h],\quad x\in \cH.
\end{equation}
$\cB(\cE)=\cB(\cE,\cE)$ is a normed algebra with respect to the 
usual algebraic operations and the operator norm, 
cf.\ Theorem 1 in \cite{Loynes2}.

Let $\cB^*(\cE,\cF)$ denote the collection of all bounded and adjointable
linear operators $A\colon\cE\ra\cF$. A \emph{contraction} 
is a linear operator $T\colon\cE\ra\cF$ such that
$[Tx,Tx]\leq [x,x]$ for all $x\in \cH$. 
By Theorem 2 in \cite{Loynes2}, if $T\in\cB^*(\cE,\cF)$ is a contraction then
$T^*$ is a contraction as well, hence, for all $T\in\cB^*(\cE,\cF)$ we have
$\|T^*\|=\|T\|$.

If $A \in \mathcal{B^*}(\cE)$ is selfadjoint, then, by 
Theorem 3 in \cite{Loynes2},
\begin{equation}\label{e:sineg}
-\|A\|[h,h]\leq [Ah,h] \leq \|A\|[h,h],\quad h\in\cE.
\end{equation}
Moreover, if $A$ is a linear operator in $\cE$ and, for some real 
numbers $m,M$, we have
\begin{equation}\label{e:sai}
m[h,h]\leq [Ah,h]\leq M[h,h],\quad h\in \cE,
\end{equation} then $A\in\cB^*(\cE)$ and $A=A^*$. If, in addition, 
$m$ is the minimum and $M$ is the maximum with this property, 
then $\|A\|=\min\{|m|,|M|\}$, see Theorem~3 in \cite{Loynes2}.

According to Theorem 4 in \cite{Loynes2},
the algebra $\cB^*(\cE)$ of bounded and adjointable operators on $\cE$ 
is a pre $C^*$-algebra and we have 
$\|A^*A\|=\|A\|^2$ for all $A\in\cB^*(\cE)$.

\subsection{VE-Modules over Ordered $*$-Algebras.}\label{ss:vemosa}
A VE-module $\cE$ over an ordered $*$-algebra $\cA$ is an ordered 
$*$-space over $\cA$, that is, (ve1)--(ve3) hold, subject to the following 
additional properties
\begin{itemize}
\item[(vem1)] $\cE$ is a right module over $\cA$, compatible with the 
multiplication with scalars: $\lambda (e a)=(\lambda e)a=e(\lambda a)$ for all
$\lambda\in\CC$, $e\in\cE$, and $a\in\cA$.
\item[(vem2)] $[e,fa+gb]_\cE=[e,f]_\cE a+[e,g]_\cE b$ for all $e,f,g\in\cE$ and 
all $a,b\in\cA$.
\end{itemize}

Given an ordered $*$-algebra $\cA$ and two VE-modules $\cE$ and $\cF$
over $\cA$, an operator $T\in\cL(\cE,\cF)$ is called a \emph{module map} if
\begin{equation*} T(ea)=T(e)a,\quad e\in\cE,\ a\in\cA.
\end{equation*}
Any operator $T\in\cL^*(\cE,\cF)$ is a module map: given arbitrary $e\in\cE$,
$f\in\cF$ and $a\in\cA$ we have
\begin{equation*}[T(ea),f]_\cF=[ea,T^*(f)]_\cE=a^*[e,T^*(f)]_\cE
= a^*[T(e),f]_\cF=[T(e)a,f]_\cF,
\end{equation*} hence $T$ is a module map. See \cite{Lance}, 
\cite{ManuilovTroitsky}, \cite{Skeide}, for the more special case of Hilbert 
modules over $C^*$-algebras.

\begin{examples}\label{ex:vemod}
Let $\cE$ and $\cF$ be 
two VE-spaces over the same ordered $*$-space $Z$. 

(1) The vector space $\cL^*(\cE,\cF)$ has a natural structure of VE-module 
over the ordered $*$-algebra $\cL^*(\cE)$, 
see Example~\ref{ex:les}, more precisely,
\begin{equation}\label{e:tese}
[T,S]=T^*S,\quad T,S\in\cL^*(\cE,\cF).
\end{equation}

(2) Let $\cS$ be a $*$-subspace of $\cL^*(\cE,\cF)$ and define a gramian
$[\cdot,\cdot]$ on $\cS$ by \eqref{e:tese}. Let $\cZ$ be the $*$-subspace of 
$\cL^*(\cE)$ generated by all operators $T^*S$, where $T,S\in\cS$. $\cZ$
has a natural structure of ordered $*$-space, where positivity of $T\in\cS$
is in the sense of \eqref{e:pos}. Thus, $\cS$ is a VE-space over $\cZ$ that,
in general, is not a VE-module. 
\end{examples}

\section{Linearisations for Invariant Kernels}\label{s:lik}

In this section we present the main dilation theorem for kernels. We start with
some preliminary results.

\subsection{Kernels with Values Adjointable Operators.}\label{ss:kvao}
Let $X$ be a nonempty set and let $\cH$ be a VE-space over the ordered 
$*$-space $Z$. 
A map $\fk\colon X\times X\ra \cL(\cH)$ is called a \emph{kernel} on $X$ and 
valued in 
$\cL(\cH)$. In case the kernel $\fk$ has all its values in $\cL^*(\cH)$, 
an \emph{adjoint} kernel 
$\fk^*\colon X\times X\ra\cL^*(\cH)$ can be associated by 
$\fk^*(x,y)=\fk(y,x)^*$ for all 
$x,y\in X$. The kernel $\fk$ is called \emph{Hermitian} if $\fk^*=\fk$.

Let $\cF=\cF(X;\cH)$ denote the complex vector space of all functions 
$f\colon X\ra \cH$ and let $\cF_0=\cF_0(X;\cH)$ be its subspace of those 
functions having 
finite support. A pairing $[\cdot,\cdot]_{\cF_0}\colon \cF_0\times \cF_0\ra Z$ 
can be defined by
\begin{equation}\label{e:prodge}
[g,h]_{\cF_0}=\sum_{y\in X} [g(y),h(y)]_\cH,\quad g,h\in\cF_0.
\end{equation} This pairing is clearly a $Z$-gramain on $\cF_0$,  hence 
$(\cF_0;[\cdot,\cdot]_{\cF_0})$ is a VE-space. 

Let us observe that the sum in \eqref{e:prodge} makes sense even 
when only one of the 
functions $g$ or $h$ has finite support, the other can be arbitrary in $\cF$. 
Thus, another pairing $[\cdot,\cdot]_\fk$ can be defined on $\cF_0$ by
\begin{equation}\label{e:prodka} [g,h]_\fk=
\sum_{x,y\in X}[\fk(y,x)g(x),h(y)]_\cH,
\quad g,h\in \cF_0.\end{equation} In general, the pairing $[\cdot,\cdot]_\fk$ 
is linear in the first variable and conjugate linear in the second variable. 
If, in addition, 
$\fk=\fk^*$ then the pairing $[\cdot,\cdot]_\fk$ is Hermitian as well, that is,
\begin{equation*} [g,h]_\fk=[h,g]_\fk^*,\quad g,h\in\cF_0.
\end{equation*} 

A \emph{convolution operator} $K\colon\cF_0\ra\cF$ can be associated to the 
kernel $\fk$ by
\begin{equation}\label{e:convop} (Kg)(y)=\sum_{x\in X} \fk(y,x)g(x),\quad g\in\cF_0,
\end{equation} and it is easy to see that $K$ is a linear operator.
There is a natural relation between the pairing $[\cdot,\cdot]_\fk$ 
and the convolution operator $K$ given by
\begin{equation*}[g,h]_\fk=[Kg,h]_{\cF_0},\quad g,h\in\cF_0.
\end{equation*} 
Given $n\in\NN$, the kernel $\fk$ is called \emph{$n$-positive} if for any 
$x_1,x_2,\ldots,x_n\in X$ and any $h_1,h_2,\ldots,h_n\in\cH$ we have
\begin{equation}\label{e:npos} \sum_{i,j=1}^n [\fk(x_i,x_j)h_j,h_i]_\cH\geq 0.
\end{equation}
 The kernel $\fk$ is called \emph{positive semidefinite} 
(or \emph{of positive type}) if it is $n$-positive for all natural numbers $n$.
The proof of the following lemma is the same as the proof of Lemma~3.1 from
\cite{Gheondea}.
 
The third assertion in the next result makes the connection with 
the kernels made up of bounded operators only as in \cite{Gheondea}.
 
 \begin{lemma}\label{l:twopos} Assume that the kernel $\fk\colon X\times 
X\ra\cL^*(\cH)$ is $2$-positive. Then:
 
 \nr{1} $\fk$ is Hermitian.
 
 \nr{2} If, for some $x\in X$, we have $\fk(x,x)=0$, then $\fk(x,y)=0$ for all 
$y\in X$.

\nr{3} Assume that, for $x,y\in X$ the operators $\fk(x,x)$ and $\fk(y,y)$ are
bounded. Then $\fk(x,y)$ and $\fk(y,x)=\fk(x,y)^*$ are bounded and
\begin{equation}\label{e:si}
\|\fk(x,y)\|^2\leq \|\fk(x,x)\|\, \|\fk(y,y)\|.
\end{equation}
In particular, if $\fk(x,x)\in\cB^*(\cE)$ for all $x\in X$, then
$\fk(y,x)\in\cB^*(\cE)$ for all $x,y\in X$.
\end{lemma}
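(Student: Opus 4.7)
The plan is to establish (1), (2), (3) in order, with (1) and (2) paralleling the proofs of Lemma~3.1 in \cite{Gheondea}. For (1), I would use $2$-positivity with $n=1$ to see that each $\fk(x,x)$ is positive hence selfadjoint (since $\cL^*(\cH)^+$ consists of selfadjoint elements only), then apply $2$-positivity to the pairs $(h_1, h_2)$ and $(h_1, \iac h_2)$ with $x_1=x$, $x_2=y$: each of the two sums is positive in $Z$ and therefore equals its own $*$-image; combining the two identities one isolates $[\fk(x,y)h_2, h_1] = [\fk(y,x)^* h_2, h_1]$ for all $h_1, h_2 \in \cH$ (using the a priori adjointability of $\fk(x,y)$ and $\fk(y,x)$), hence $\fk(x,y) = \fk(y,x)^*$. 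For (2), the pairing $P((h_1, h_2),(h_1', h_2')) = \sum_{i,j=1}^2 [\fk(x_i, x_j) h_j, h_i']$ on $\cH\oplus\cH$ is positive semidefinite sesquilinear by $2$-positivity; when $\fk(x,x) = 0$ the vector $(h, 0)$ is isotropic, so Lemma~\ref{l:sesqui} forces its pairing with $(0, h')$ to vanish, which reads $[\fk(y,x) h, h'] = 0$, whence $\fk(y,x) = 0$ and, by (1), $\fk(x,y) = 0$.

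Part (3) is the substantive new content. Abbreviate $A = \fk(x,x)$, $B = \fk(x,y)$, $C = \fk(y,y)$; by (1), $\fk(y,x) = B^*$. By hypothesis $A, C \in \cB^*(\cH)$ are positive, so the Loynes bound \eqref{e:sineg} gives $[Aw, w] \leq \|A\|\,[w, w]$ and $[Cw, w] \leq \|C\|\,[w, w]$ for all $w \in \cH$. The key algebraic step is to substitute $h_1 = -\mu B v$ and $h_2 = v$ into the $2$-positivity inequality for real $\mu > 0$ and arbitrary $v \in \cH$; using $[B^*Bv, v] = [Bv, Bv]$ (immediate from \eqref{e:adj}), the four terms collapse to
\begin{equation*}
\mu^2 [ABv, Bv] - 2\mu [Bv, Bv] + [Cv, v] \geq 0.
\end{equation*}
Replacing the first summand by its upper bound $\mu^2 \|A\|\,[Bv, Bv]$ and the third by $\|C\|\,[v, v]$, then choosing $\mu = 1/\|A\|$ (which maximizes $2\mu - \mu^2 \|A\|$ to the value $1/\|A\|$), one obtains $[Bv, Bv] \leq \|A\|\|C\|\,[v, v]$ for all $v \in \cH$. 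This simultaneously proves that $B$ is Loynes-bounded, so $B \in \cB^*(\cH)$ since $B$ is adjointable by hypothesis, and delivers the Schwarz-type estimate \eqref{e:si}. The degenerate case $\|A\| = 0$ (resp.\ $\|C\| = 0$) forces $A = 0$ (resp.\ $C = 0$) via \eqref{e:sineg}, and then (2) already gives $B = 0$, so \eqref{e:si} is trivial. Finally, $\fk(y, x) = B^*$ lies in $\cB^*(\cH)$ with $\|B^*\| = \|B\|$ by Theorem~2 of \cite{Loynes2} cited in the excerpt.

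The hardest point is that the usual Cauchy--Schwarz route---reducing a nonnegative real quadratic $a\mu^2 + b\mu + c \geq 0$ to its discriminant $b^2 \leq 4ac$---is unavailable here: the coefficients $[ABv, Bv]$, $[Bv, Bv]$, $[Cv, v]$ live in the ordered $*$-space $Z$, where discriminants are not defined. The workaround is to exploit Loynes-boundedness of the diagonal operators $A, C$ to trade two of these $Z$-valued coefficients for scalar multiples of $[Bv, Bv]$ and $[v, v]$ \emph{before} optimizing in the real parameter $\mu$. This is precisely why the boundedness hypothesis on $\fk(x,x)$ and $\fk(y,y)$ is indispensable at this level of generality, and it is also what explains the passage from $\cL^*(\cH)$-valued kernels in the present article to the $\cB^*(\cH)$-valued setting of \cite{Gheondea}.
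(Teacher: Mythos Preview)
Your proposal is correct and follows essentially the same approach as the paper. For parts (1) and (2) you outline the standard arguments referenced in the paper, and for part (3) you make the same key substitution into the $2$-positivity inequality, only with the roles of $x$ and $y$ interchanged: the paper fixes an arbitrary $h$ at $x$ and takes $g=-\fk(x,y)^*h/\|\fk(y,y)\|$ at $y$ (i.e.\ your $\mu=1/\|C\|$ acting on the second slot), whereas you fix $v$ at $y$ and take $h_1=-\mu Bv$ at $x$ with $\mu=1/\|A\|$; both choices yield the same Schwarz-type bound after using \eqref{e:sineg} on the diagonal terms.
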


\begin{proof} The proof of (1) and (2) is the same as the proof of 
Lemma~3.1 from \cite{Gheondea}.

(3) Assume that both operators $\fk(x,x)$ and $\fk(y,y)$ are
bounded, see Subsection~\ref{ss:ves}, hence
$\fk(x,x),\fk(y,y)\in\cB^*(\cE)$. 
If $\fk(y,y)=0$ then, by (2), $\fk(x,y)=0$ and 
$\fk(y,x)=\fk(x,y)^*=0$, hence bounded, and the inequality \eqref{e:si} holds
trivially.

Assume that $\fk(y,y)\neq 0$, hence $\|\fk(y,y)\|>0$. Since $\fk$ is
$2$-positive, for any $h,g\in\cH$ we have
\begin{equation}\label{e:twoposker}
 [\fk(x,x)h,h]+[\fk(x,y)g,h]+[\fk(y,x)h,g]+[\fk(y,y)g,g]\geq 0.
\end{equation}
We let $g=-\fk(x,y)^*h/\|\fk(y,y)\|$ in \eqref{e:twoposker}, take
into account \eqref{e:sai} and get
\begin{align*}\frac{2}{\|\fk(y,y)\|} [\fk(y,x)h,\fk(y,x)h] & \leq
  [\fk(x,x)h,h]+\frac{1}{\|\fk(y,y)\|^2} [\fk(y,y)\fk(y,x)h,\fk(y,x)h] \\
& \leq [\fk(x,x)h,h]+ \frac{\|\fk(y,y)\|}{\|\fk(y,y)\|^2}
  [\fk(y,x)h,\fk(y,x)h]\\
& =  [\fk(x,x)h,h]+ \frac{1}{\|\fk(y,y)\|}
  [\fk(y,x)h,\fk(y,x)h],
\end{align*}
hence
\begin{equation*}[\fk(y,x)h,\fk(y,x)h]\leq \|\fk(y,y)\| [\fk(x,x)h,h]\leq
  \|\fk(x,x)\| \, \|\fk(y,y)\| [h,h],
\end{equation*} which proves that $\fk(y,x)$ is a bounded operator and 
the inequality \eqref{e:si}.

\end{proof}

\begin{example}\label{e:kervesp}
This example is a generalisation of Example~\ref{ex:eos}.(6). 
Let $X$ be a nonempty set, $\cE$ 
be a VE-space over the ordered $*$-space $Z$. Let $\cK(X;\cE)$ 
be the vector space of all kernels 
$\fk:X\times X \ra \cL^*(\cE)$, 
and let $\cK(X;\cE)^+$ be the set of 
all positive semidefinite kernels. 
Then $\cK(X;\cE)^+$ is a cone and, 
by Lemma \ref{l:twopos}, 
it consists only of selfadjoint 
elements. If $\fk\in (\cK(X;\cE)^+\cap -\cK(X;\cE)^+)$, 
we obtain $[\fk(x,x)h,h]_{\cE}=0$ for 
all $x\in X$ and $h\in\cE$ by 
strictness of the cone of $Z$. 
Since $\fk(x,x)$ is a positive operator, hence selfadjoint, by means of the
analog of the polarisation formula \eqref{e:polar}, see the second part of 
Remark~\ref{r:polar}, it follows that $\fk(x,x)=0$ 
for any $x\in X$. Then, by Lemma~\ref{l:twopos} again, $\fk(x,y)=0$ 
for all $x,y\in X$, i.e. $\fk=0$. 
Therefore $\cK(X;\cE)$ is an ordered $*$-space 
with cone $\cK(X;\cE)^+$.  
A multiplication can be defined on $\cK(X;\cE)$: for $\fk,\fl\in\cK(X;\cE)$ we
let $(\fk\fl)(x,y)=\fk(x,y)\fl(x,y)$ for all $x,y\in X$. With respect to this 
multiplication, $\cK(X;\cE)$ is an ordered $*$-algebra.
\end{example}

Given an $\cL^*(\cH)$-valued kernel $\fk$ on a nonempty set $X$, for some 
VE-space $\cH$ on an ordered $*$-space $Z$,  a \emph{VE-space 
linearisation} or, equivalently, a
\emph{VE-space Kolmogorov decomposition} of $\fk$ is, by definition, 
a pair $(\cK;V)$, subject to the following conditions:
  
  \begin{itemize}
  \item[(kd1)] $\cK$ is a VE-space over the same ordered $*$-space $Z$.
  \item[(kd2)] $V\colon X\ra\cL^*(\cH,\cK)$ satisfies $\fk(x,y)=V(x)^*V(y)$ 
for all $x,y\in X$. \end{itemize}
The VE-space linearisation $(\cK;V)$ is called \emph{minimal} if
  \begin{itemize}
  \item[(kd3)] $\lin V(X)\cH=\cK$.
  \end{itemize}
Two VE-space linearisations $(V;\cK)$ and $(V';\cK')$ 
of the same kernel $\fk$ are 
called \emph{unitary equivalent} if there exists a unitary operator 
$U\colon \cK\ra\cK'$ such that $UV(x)=V'(x)$ for all $x\in X$.
  
The uniqueness of a minimal VE-space 
linearisation $(\cK;V)$ of a positive semidefinite kernel $\fk$, 
modulo unitary equivalence, 
follows in the usual way: if $(\cK';V')$ 
is another minimal VE-space linearisation of 
$\fk$, for arbitrary $x_1,\ldots,x_m,y_1,\ldots,y_n\in X$ and arbitrary 
$h_1,\ldots,h_m,g_1,\ldots,g_n\in \cH$, we have
\begin{align*} [\sum_{j=1}^m V(x_j)h_j, \sum_{i=1}^n V(y_i)g_i]_\cK 
& = \sum_{j=1}^m \sum_{i=1}^n [V(x_j)h_j,V(y_i)g_i]_\cK\\
& = \sum_{i=1}^n\sum_{j=1}^m
[\fk(y_i,x_j)h_j,g_i]_\cK \\
& =   \sum_{j=1}^m \sum_{i=1}^n [V'(x_j)h_j,V'(y_i)g_i]_{\cK'} \\
& = [\sum_{j=1}^m V'(x_j)h_j, \sum_{i=1}^n V'(y_i)g_i]_{\cE'},
\end{align*} hence $U\colon \lin V(X)\ra \lin V'(X)$ defined by 
\begin{equation}\label{e:udef}\sum_{j=1}^m 
V(x_j)h_j\mapsto \sum_{j=1}^m V'(x_j)h_j\end{equation} 
is a correctly everywhere defined linear operator, 
isometric and onto. Thus, $U$ is a 
VE-space isomorphism $U\colon \cK\ra\cK'$ and $UV(x)=V'(x)$ for all 
$x\in X$, by construction.

\subsection{Reproducing Kernel VE-Spaces.}\label{ss:rks}
Let $\cH$ be a VE-space over the ordered $*$-space $Z$, and let $X$ be a 
nonempty set. A VE-space $\cR$, 
over the same ordered $*$-space 
$Z$, is called an \emph{$\cH$-reproducing kernel VE-space on $X$} 
if there exists a Hermitian kernel $\fk\colon X\times X\ra\cL^*(\cH)$ 
such that the following axioms are satisfied:
\begin{itemize} 
\item[(rk1)] $\cR$ is a subspace of $\cF(X;\cH)$, with all algebraic operations.
\item[(rk2)] For all $x\in X$ and all $h\in\cH$, 
the $\cH$-valued function $\fk_x h=\fk(\cdot,x)h\in\cR$.
\item[(rk3)] For all $f\in\cR$ we have $[f(x),h]_\cH=[f,\fk_x h]_\cR$, for all 
$x\in X$ and $h\in \cH$.
\end{itemize}
As a consequence of (rk2), $\lin\{\fk_xh\mid x\in X,\ h\in\cH\}\subseteq \cR$.
The reproducing kernel VE-space $\cR$ is called \emph{minimal} if
the following property holds as well:
\begin{itemize}
\item[(rk4)] $\lin\{\fk_xh\mid x\in X,\ h\in \cH\}=\cR$.
\end{itemize}

Observe that if $\cR$ is an $\cH$-reproducing kernel VE-space on $X$, with 
kernel 
$\fk$, then $\fk$ is positive semidefinite and uniquely determined by $\cR$ 
hence we can talk about \emph{the} 
$\cH$-reproducing kernel $\fk$ corresponding to $\cR$.
On the other hand, a minimal reproducing kernel VE-space $\cR$ is 
uniquely determined by its reproducing kernel $\fk$.

The classical reproducing kernel Hilbert spaces, e.g.\ see \cite{Aronszajn}, 
are characterised, within the Hilbert function spaces, by the continuity of the 
evaluation functionals. In the following, we generalise this by showing that, 
in 
the absence of an analogue of the Riesz Representation Theorem, it is the 
adjointability which makes the difference. Letting $\cH$ be a VE-space over
an ordered $*$-space $Z$, for $X$ a nonempty set, an \emph{evaluation operator}
$E_x\colon \cF(X;\cH)\ra \cH$ can be defined for each $x\in X$ by letting 
$E_x f=f(x)$ for all $f\in\cF(X;\cH)$. Clearly, $E_x$ is linear.

\begin{proposition}\label{p:rks}
Let $X$ be a nonempty set, $\cH$ a VE-space over an ordered $*$-space 
$Z$, and let $\cR\subseteq \cF(X;\cH)$, with all algebraic operations, 
be a VE-space over $Z$. Then $\cR$ 
is an $\cH$-reproducing kernel VE-space if and only if, for all $x\in X$, the 
restriction of the evaluation operator $E_x$ to $\cR$ is adjointable as a
linear operator $\cR\ra \cH$.
\end{proposition}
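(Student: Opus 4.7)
The plan is to exhibit, in both directions, a direct correspondence between the reproducing kernel $\fk$ and the family $(E_x^*)_{x\in X}$ of adjoints of the evaluation operators, via the formula $\fk_x h = E_x^* h$, or equivalently $\fk(y,x) = E_y E_x^*$. Once this dictionary is set up, axiom (rk3) and the defining relation for the adjoint become literally the same equation.

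For necessity, suppose $\cR$ is an $\cH$-reproducing kernel VE-space with reproducing kernel $\fk$. Fix $x \in X$ and let $F_x\colon \cH \ra \cR$ be defined by $F_x h = \fk_x h$; this is well-defined by (rk2) and linear in $h$ since each $\fk(y,x) \in \cL^*(\cH)$ is a linear operator on $\cH$. Then axiom (rk3) reads
\begin{equation*}
[E_x f, h]_\cH = [f(x), h]_\cH = [f, \fk_x h]_\cR = [f, F_x h]_\cR, \quad f \in \cR,\ h \in \cH,
\end{equation*}
which is precisely the assertion that $E_x|\cR$ is adjointable with adjoint $F_x$.

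For sufficiency, assume each $E_x|\cR$ is adjointable and write $E_x^*\colon \cH \ra \cR$ for its adjoint. Define $\fk(y,x) := E_y E_x^* \colon \cH \ra \cH$ as a composition of linear maps. To check that $\fk(y,x) \in \cL^*(\cH)$, I use that adjoints are involutive whenever they exist, hence $(E_x^*)^* = E_x$, and compute
\begin{equation*}
\fk(y,x)^* = (E_y E_x^*)^* = (E_x^*)^* E_y^* = E_x E_y^* = \fk(x,y),
\end{equation*}
so $\fk$ is Hermitian and takes values in $\cL^*(\cH)$. Axiom (rk2) is verified by observing that $\fk_x h$, viewed as a function of $y$, equals $y \mapsto E_y E_x^* h = (E_x^* h)(y)$, so $\fk_x h = E_x^* h \in \cR$. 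Axiom (rk3) then reduces to the adjoint relation: $[f(x), h]_\cH = [E_x f, h]_\cH = [f, E_x^* h]_\cR = [f, \fk_x h]_\cR$.

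The only step requiring any real care is the verification that each $\fk(y,x)$ lies in $\cL^*(\cH)$, not merely in $\cL(\cH)$, since at this level of generality not every linear operator is adjointable; this is handled by the uniqueness and involutivity of the adjoint. Beyond that, the argument is a straightforward translation between the two equivalent formulations of the reproducing property, and I do not anticipate a substantive obstacle.
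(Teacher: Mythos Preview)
Your proof is correct and follows essentially the same route as the paper: both directions hinge on the identification $\fk_x = E_x^*$, with (rk3) and the adjoint relation becoming the same equation. The only cosmetic difference is that the paper verifies Hermiticity of $\fk$ by unpacking $[\fk(y,x)h,l]_\cH$ via two applications of the reproducing property, whereas you invoke the involutivity and composition rules for adjoints directly to get $\fk(y,x)^*=E_xE_y^*=\fk(x,y)$; these are the same computation in different notation.
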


\begin{proof} Assume first that $\cR$ is an $\cH$-reproducing kernel VE-space
on $X$ and let $\fk$ be its reproducing kernel.
For any $h\in\cH$ and any $f\in\cR$
\begin{equation}\label{e:aeo}
[E_xf,h]_\cH=[f(x),h]_\cH=[f,\fk_x h]_\cR.
\end{equation}
Since $\fk_x \in\cL(\cH,\cR)$, it follows that $E_x$ is adjointable and, in 
addition, $E_x^*=\fk_x$, for all $x\in X$.

Conversely, assume that, for all $x\in X$, the evaluation operator 
$E_x\in\cL^*(\cR,\cH)$. Equation \eqref{e:aeo} 
shows that, in order to show that $\cR$ is
a reproducing kernel VE-space, we should define the kernel $\fk$ 
in the following way:
\begin{equation}\label{e:drk}
\fk(y,x)=(E_x^*h)(y),\quad x,y\in X,\ h\in\cH.
\end{equation}
It is clear that $k(y,x)\colon \cH\ra\cH$ is a linear operator and observe
that $\fk_x h=E_x ^*h$ for all $x\in X$ and all $h\in \cH$. 
The reproducing property (rk3) holds:
\begin{equation*} [f(x),h]_\cH=[E_x f,h]_\cH=[f,E_x^* h]_\cR=[f,\fk_xh]_\cR,
\quad f\in\cR,\ h\in\cH,\ x\in X.
\end{equation*}
The axioms (rk1) and (rk2) are clearly satisfied, so it only remains to prove
that $\fk$ is a Hermitian kernel. To see this,
fix $x,y\in X$ and $h,l\in\cH$. Then
\begin{align*}[\fk(y,x)h,l]_\cH & = [(\fk_x h)(y),l]_\cH =[\fk_x h,\fk_y l]_\cR\\
& = [\fk_y l,\fk_x h]_\cR^*=[\fk(x,y)l,h]_\cR^*=[h,\fk(x,y)l]_\cR.
\end{align*}
Therefore, $\fk(y,x)$ is adjointable and $\fk(y,x)^*=\fk(x,y)$, hence $\fk$ is
a Hermitian kernel. We have proven that $\fk$ is the reproducing 
kernel of $\cR$.
\end{proof}

There is a very close connection between VE-space linearisations and
reproducing kernel VE-spaces.

\begin{proposition}\label{p:lvsrk}
Let $X$ be a nonempty set, $\cH$ a VE-space over an ordered $*$-space
$Z$, and let $\fk\colon X\times X\ra\cL^*(\cH)$ be a Hermitian kernel.

\nr{1} Any $\cH$-reproducing kernel VE-space $\cR$ with kernel $\fk$ is
a VE-space linearisation $(\cR;V)$ of $\fk$, with $V(x)=\fk_x$ for all $x\in X$.

\nr{2} For any minimal VE-space linearisation $(\cK;V)$ of $\fk$, letting 
\begin{equation}\label{e:redef} \cR=\{V(\cdot)^*f\mid f\in\cK\},
\end{equation} we obtain the minimal $\cH$-reproducing kernel VE-space with
reproducing kernel $\fk$.
\end{proposition}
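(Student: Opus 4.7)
The plan is to verify each axiom in turn, the first part being essentially a direct rewrite of the reproducing property, and the second part being a transport-of-structure argument via the map $f \mapsto V(\cdot)^*f$.

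For part (1), condition (kd1) holds by hypothesis, so only (kd2) needs attention. Setting $V(x) := \fk_x$, I would first observe that Proposition~\ref{p:rks} (or a direct reading of (rk3)) gives $V(x) \in \cL^*(\cH,\cR)$ with $V(x)^* = E_x$, the evaluation at $x$. Then for arbitrary $h,g\in\cH$ and $x,y\in X$, I would compute
\begin{equation*}
[V(x)^*V(y)h,g]_\cH = [V(y)h, V(x)g]_\cR = [\fk_y h, \fk_x g]_\cR = [(\fk_y h)(x), g]_\cH = [\fk(x,y)h,g]_\cH,
\end{equation*}
where the penultimate equality uses (rk3) applied to $f = \fk_y h \in \cR$. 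Since a VE-space inner product separates points (if $[u,g]=[v,g]$ for all $g$, then setting $g = u-v$ and using conjugate linearity in the first slot gives $[u-v,u-v]=0$, hence $u=v$ by (ve1)), this identifies $V(x)^* V(y) = \fk(x,y)$.

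For part (2), I would transport the VE-space structure of $\cK$ onto $\cR$ through the linear map
\begin{equation*}
\Phi\colon \cK\ra \cF(X;\cH),\quad (\Phi f)(x) = V(x)^* f,\quad x\in X,
\end{equation*}
which has range equal to $\cR$ by definition. The key step is to establish that $\Phi$ is injective: if $\Phi f = 0$, then for every $x\in X$ and every $h\in\cH$,
\begin{equation*}
[f, V(x)h]_\cK = [V(x)^* f, h]_\cH = 0,
\end{equation*}
so by minimality (kd3) of $(\cK;V)$, $[f,g]_\cK = 0$ for all $g\in\cK$; taking $g=f$ and applying (ve1) yields $f=0$. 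Injectivity allows one to define unambiguously $[\Phi f,\Phi g]_\cR := [f,g]_\cK$, making $\cR$ a VE-space over $Z$ and $\Phi$ a VE-space isomorphism.

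It then remains to verify the reproducing kernel axioms. Axiom (rk1) is built in. For (rk2), $\fk(y,x)h = V(y)^* V(x)h$ shows $\fk_x h = \Phi(V(x)h) \in \cR$. For (rk3), any $f\in \cR$ has the form $f = \Phi \tilde f$ and
\begin{equation*}
[f(x),h]_\cH = [V(x)^* \tilde f, h]_\cH = [\tilde f, V(x)h]_\cK = [\Phi\tilde f, \Phi(V(x)h)]_\cR = [f,\fk_x h]_\cR.
\end{equation*}
Minimality (rk4) follows from $\lin\{\fk_x h\mid x\in X,\ h\in\cH\} = \Phi(\lin V(X)\cH) = \Phi(\cK) = \cR$. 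Hermitianness of $\fk$ is automatic from $\fk(x,y) = V(x)^* V(y)$. I expect the only nontrivial point to be the injectivity of $\Phi$, which is exactly where the minimality hypothesis on $(\cK;V)$ must be invoked; everything else is formal.
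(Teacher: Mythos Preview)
Your proposal is correct and follows essentially the same argument as the paper: the paper defines the same map $U=\Phi$ given by $Uf=V(\cdot)^*f$, proves its injectivity via minimality exactly as you do, transports the gramian to $\cR$, and verifies the reproducing kernel axioms; for part (1) it likewise sets $V(x)=\fk_x$, identifies $V(x)^*$ with evaluation at $x$, and recovers $\fk(y,x)=V(y)^*V(x)$ from the reproducing property. The only cosmetic difference is that the paper presents the two parts in reverse order.
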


\begin{proof}
(2)$\Ra$(1). Let $(\cK;\pi;V)$ be a minimal VE-space 
linearisation of the kernel $\fk$ on $X$.
Let $\cR$ be the set of all 
functions $X\ni x\mapsto V(x)^*f\in\cH$, in 
particular $\cR\subseteq \cF(X;\cH)$, and we endow $\cR$ with the algebraic 
operations inherited from the complex vector space $\cF(X;\cH)$. 

The correspondence
\begin{equation}\label{e:defuv} \cK\ni f\mapsto Uf=V(\cdot)^*f\in\cR
\end{equation} is bijective. By the definition of $\cR$, this correspondence 
is surjective. 
In order to verify that it is injective as well, let $f,g\in\cK$ be such that 
$V^*(\cdot)f=V^*(\cdot)g$. Then, for all $x\in X$ and all $h\in\cH$ we have
\begin{equation*}[V(x)^*f,h]_\cH=[V(x)^*g,h]_\cH,
\end{equation*} equivalently,
\begin{equation*}[f-g,V(x)h]_\cK=0,\quad x\in X,\ h\in\cH.
\end{equation*} By the minimality of the VE-space linearisation 
$(\cK;V)$ it follows that $g=f$. Thus, $U$ is a bijection.

Clearly, the bijective map $U$ defined at \eqref{e:defuv} is linear, 
hence a linear isomorphism of complex vector spaces $\cK\ra \cR$. 
On $\cR$ we introduce a $Z$-valued pairing
\begin{equation}\label{e:ufege}
[Uf,Ug]_\cR=[V(\cdot)^*f,V(\cdot)^*g]_\cR=[f,g]_\cK,\quad f,g\in\cK.
\end{equation} Then $(\cR;[\cdot,\cdot]_\cR)$ is a VE-space over $Z$ since, 
by \eqref{e:ufege}, we 
transported the $Z$-gramian from $\cK$ to $\cR$ or, in other words, we have 
defined on $\cR$ the $Z$-gramian that makes the linear isomorphism $U$ 
a unitary operator between the VE-spaces $\cK$ and $\cR$.

We show that
$(\cR;[\cdot,\cdot]_\cR)$ is an $\cH$-reproducing kernel VE-space with 
corresponding reproducing kernel $\fk$.
By definition, $\cR\subseteq \cF(X;\cH)$. On the other hand, since 
\begin{equation*}\fk_x(y)h=\fk(y,x)h=V(y)^*V(x)h,\mbox{ for all }x,y\in X\mbox{
    and all }h 
\in\cH,\end{equation*} taking into account that $V(x)h\in\cK$, 
by \eqref{e:redef} it follows that $\fk_x\in\cR$ for all $x\in X$. 
Further, for all $f\in\cR$, $x\in X$, and $h\in\cH$, we have
\begin{align*}[f,\fk_xh]_\cR & =[V(\cdot)^*g,\fk_x h]_\cR
=[V(\cdot)^*g,V(\cdot)^*V(x)h]_\cR
\\ & =[g,V(x)h]_\cK=[V(x)^*g,h]_\cH=[f(x),h]_\cH,
\end{align*} where $g\in\cK$ is the unique vector such that 
$V(x)^*g=f(x)$, which shows 
that $\cR$ satisfies the reproducing axiom as well.

(1)$\Ra$(2). Assume that $(\cR;[\cdot,\cdot]_\cR)$ is an $\cH$-reproducing 
kernel VE-space on $X$, with reproducing kernel $\fk$.
 We let $\cK=\cR$ and define
\begin{equation}\label{e:vexak}V(x)h=\fk_x h,\quad x\in X,\ h\in \cH.
\end{equation} Note that $V(x)\colon \cH\ra \cK$ is linear for all $x\in X$. 

We show that $V(x)\in\cL^*(\cH,\cK)$ for all $x\in X$. 
To see this, first note that, by the reproducing property,
\begin{equation}\label{e:fevex}
[f,V(x)h]_\cK=[f,\fk_xh]_\cR=[f(x),h]_\cH,\quad x\in X,\ h\in\cH.
\end{equation} Let us then, for fixed $x\in X$, consider the linear operator 
$W(x)\colon \cR=\cK\ra \cH$ defined by $W(x)f=f(x)$ for all $f\in\cR=\cK$. 
From 
\eqref{e:fevex} we conclude that $V(x)$ is adjointable and $V(x)^*=W(x)$ for 
all $x\in X$.

Finally, by the reproducing axiom, for all $x,y\in X$ and all $h,g\in\cH$ 
we have
\begin{equation*}[V(y)^*V(x)h,g]_\cH=[V(x)h,V(y)g]_\cR=[\fk_xh,\fk_yg]_\cR
=[\fk(y,x)h,g]_\cH,
\end{equation*} hence $V(y)^*V(x)=\fk(y,x)$ for all $x,y\in X$. 
Thus, $(\cK;V)$ is a VE-space linearisation of $\fk$ (actually, a minimal one). 
\end{proof}

\begin{remark}\label{r:lvsrk}
The proof of Proposition~\ref{p:lvsrk} provides an explicit correspondence 
between the class of minimal VE-space 
linearisations of $\fk$, identified by unitary equivalence, and the minimal 
$\cH$-reproducing kernel VE-space associated to $\fk$.
\end{remark}

\subsection{$*$-Representations on VE-Spaces
Associated to Invariant Kernels}\label{ss:r}
Let a (multiplicative) semigroup $\Gamma$ act on $X$, 
denoted by $\xi\cdot x$, for all $\xi\in\Gamma$ 
and all $x\in X$. By definition, we have 
\begin{equation}\label{e:action}
\alpha\cdot(\beta\cdot x)=(\alpha\beta)\cdot x\mbox{ for all }\alpha,
\beta\in \Gamma\mbox{ and all }x\in X.\end{equation} 
Equivalently, this means that we have a semigroup morphism 
$\Gamma\ni\xi\mapsto \xi\cdot \in G(X)$, where $G(X)$ denotes the 
semigroup, with respect to composition, of all maps $X\ra X$. 
In case the semigroups $\Gamma$ has a unit 
$\epsilon$, the action is called \emph{unital} if $\epsilon\cdot x=x$ 
for all $x\in X$, equivalently, $\epsilon\cdot=\mathrm{Id}_X$.

We assume further that $\Gamma$ is a $*$-semigroup, that is, there is an 
\emph{involution} $*$ on $\Gamma$; this means that $(\xi\eta)^*=\eta^* \xi^*$ 
and $(\xi^*)^*=\xi$ for all $\xi,\eta\in\Gamma$. 
Note that, in case $\Gamma$ has a unit $\epsilon$ then 
$\epsilon^*=\epsilon$.

Given a VE-space $\cH$ we are interested in those Hermitian kernels 
$\fk\colon X\times X\ra\cL^*(\cH)$ that are \emph{invariant} under the action 
of $\Gamma$ on $X$, that is,
\begin{equation}\label{e:invariant} \fk(y,\xi\cdot x)=\fk(\xi^*\cdot y,x)
\mbox{ for all }x,y\in X\mbox{ and all }\xi\in\Gamma.
\end{equation}
A triple $(\cK;\pi;V)$ is called an \emph{invariant VE-space linearisation} 
of  the kernel $\fk$ and the action of $\Gamma$ on $X$, shortly a
\emph{$\Gamma$-invariant VE-space linearisation} of $\fk$, if:
\begin{itemize}
\item[(ikd1)] $(\cK;V)$ is a VE-space linearisation of the kernel $\fk$.
\item[(ikd2)] $\pi\colon \Gamma\ra\cL^*(\cK)$ is a $*$-representation, that
  is, a multiplicative $*$-morphism.
\item[(ikd3)] $V$ and $\pi$ are related by the formula: 
$V(\xi\cdot x)= \pi(\xi)V(x)$, for all $x\in X$, $\xi\in\Gamma$.
\end{itemize}

\begin{remarks} (1)
Let $(\cK;\pi;V)$ be a $\Gamma$-invariant VE-space 
linearisation of the kernel 
$\fk$. Since $(\cK;V)$ is a VE-space linearisation
and taking into account the axiom (ikd3), we have
\begin{align}\label{e:keyxi}
\fk(y,\xi\cdot x)  & =V(y)^*V(\xi\cdot x)=V(y)^* \pi(\xi)V(x) \\
&=(\pi(\xi^*)V(y))^* 
V(x)= \fk(\xi^*\cdot y,x),\quad x,y\in X,\ \xi\in\Gamma,\nonumber
\end{align} hence $\fk$ is invariant under the action of $\Gamma$ on $X$.

(2) Observe that, if the action of $\Gamma$ on $X$ is unital then, for a
$\Gamma$-invariant VE-space linearisation $(\cK;\pi;V)$, the two conditions
$\fk(x,y)=V(x)^*V(y)$, $x,y\in X$, and $V(\xi\cdot x)=\pi(\xi)V(x)$,
$\xi\in\Gamma$ and $x\in X$, can be equivalently combined into two slightly
different conditions, namely, $\pi$ unital and
$\fk(x,\xi\cdot y)=V(x)^*\pi(\xi)V(y)$, $\xi\in\Gamma$ and $x,y\in X$.
\end{remarks}

If, in addition to the axioms (ikd1)--(ikd3), the triple $(\cK;\pi;V)$ 
has the property
\begin{itemize}
\item[(ikd4)] $\lin V(X)\cH=\cK$,
\end{itemize} that is, the VE-space linearisation $(\cK;V)$ is minimal,
then $(\cK;\pi;V)$ is called a \emph{minimal
$\Gamma$-invariant VE-space linearisation} of $\fk$ and the action of 
$\Gamma$ on $X$.

Minimal invariant VE-space linearisations have a built-in 
linearity property; the proof is the same with that of 
Proposition~4.1 in \cite{Gheondea}.

\begin{proposition}\label{P:vhinvkolmolinear} Assume that, given 
a VE-space adjointable operator valued kernel $\fk$, 
invariant under the action of the $*$-semigroup 
$\Gamma$ on $X$, for some fixed $\alpha,\beta,\gamma\in\Gamma$ 
we have 
$\fk(y,\alpha\cdot x)+\fk(y,\beta\cdot x)=\fk(y,\gamma\cdot x)$ 
for all $x,y\in X$. Then, for 
any minimal invariant VE-space linearisation $(\cK;\pi;V)$ of $\fk$, the 
representation satisfies $\pi(\alpha)+\pi(\beta)=\pi(\gamma)$.
\end{proposition}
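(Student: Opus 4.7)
The plan is to reduce the claim to the vanishing of the single adjointable operator $T := \pi(\alpha) + \pi(\beta) - \pi(\gamma) \in \cL^*(\cK)$, using the axioms (ikd1)--(ikd4) of a minimal invariant VE-space linearisation together with the strictness of the cone $Z^+$ built into the gramian axiom (ve1).

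First, I would combine axioms (ikd1) and (ikd3) to derive the basic identity $V(y)^*\pi(\xi)V(x) = V(y)^* V(\xi\cdot x) = \fk(y,\xi\cdot x)$, valid for every $x,y\in X$ and every $\xi\in\Gamma$. Applying this identity to $\alpha$, $\beta$, $\gamma$ and invoking the hypothesis $\fk(y,\alpha\cdot x)+\fk(y,\beta\cdot x)=\fk(y,\gamma\cdot x)$ immediately gives $V(y)^* T V(x) = 0$ for all $x,y\in X$. Pairing the vectors $V(x)h$ and $V(y)g$, with $h,g\in\cH$, through the gramian of $\cK$ then yields $[T V(x)h, V(y)g]_\cK = [V(y)^* T V(x) h, g]_\cH = 0$.

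By linearity of the $Z$-gramian in each variable, this vanishing propagates to all pairs of elements drawn from $\lin V(X)\cH$, which by the minimality axiom (ikd4) is the whole of $\cK$. Hence $[T e, e']_\cK = 0$ for every $e, e' \in \cK$. Since each $\pi(\xi)$ lies in $\cL^*(\cK)$, the operator $T$ is adjointable, so $T e \in \cK$ for every $e \in \cK$; choosing $e' = T e$ gives $[T e, T e]_\cK = 0$, and the strictness of the cone $Z^+$ embedded in (ve1) forces $T e = 0$. As $e\in\cK$ was arbitrary, $T=0$, which is the desired equality $\pi(\alpha)+\pi(\beta)=\pi(\gamma)$.

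The argument is essentially a matrix-coefficient computation coupled with minimality, and I do not foresee a serious obstacle. The only delicate point is the final passage from a vanishing gramian to a vanishing vector: it rests precisely on the strictness of $Z^+$, and this is also where minimality (ikd4) is indispensable, since without it one could only conclude that $T$ vanishes on the subspace $\lin V(X)\cH$ rather than on all of $\cK$.
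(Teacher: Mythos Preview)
Your argument is correct and is precisely the standard one: compress the hypothesis into $V(y)^*TV(x)=0$, use minimality to conclude $[Te,e']_\cK=0$ for all $e,e'\in\cK$, then specialise $e'=Te$ and invoke (ve1). The paper does not write out a proof here but simply refers to Proposition~4.1 of \cite{Gheondea}, where the same matrix-coefficient-plus-minimality computation is carried out; your proposal reproduces that argument faithfully.
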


Two $\Gamma$-invariant VE-space linearisations $(\cK;\pi;V)$ and 
$(\cK';\pi';V')$, 
of the same Hermitian kernel $\fk$, are called \emph{unitary equivalent} 
if there exists a unitary operator $U\colon \cK\ra\cK'$ such that 
$U\pi(\xi)=\pi'(\xi)U$ for all $\xi\in\Gamma$, 
and $UV(x)=V'(x)$ for all $x\in X$. Let us note that, in case both of these 
invariant VE-space linearisations are minimal, then this is equivalent 
with the requirement that the VE-space linearisations $(\cK;V)$ and 
$(\cK';V')$ are unitary equivalent.

The main result of this article
is the following theorem. It is stated in terms of both linearisations and 
reproducing kernels and  the proof points out essentially a 
reproducing kernel and operator range construction.

\begin{theorem}\label{t:vhinvkolmo} Let $\Gamma$ be a $*$-semigroup 
that acts 
on the nonempty set $X$ and let $\fk\colon X\times X\ra\cL^*(\cH)$ be a kernel, 
for some VE-space $\cH$ over an ordered $*$-space $Z$. 
The following assertions are equivalent:

\begin{itemize}
\item[(1)] $\fk$ is positive semidefinite, in the sense of \eqref{e:npos}, 
and invariant under the action of $\Gamma$ on $X$, that is, 
\eqref{e:invariant} holds.
\item[(2)] $\fk$ has a $\Gamma$-invariant VE-space linearisation 
$(\cK;\pi;V)$.
\item[(3)] $\fk$ admits an $\cH$-reproducing kernel VE-space $\cR$ and 
there exists a $*$-representation $\rho\colon \Gamma\ra\cL^*(\cR)$ such that 
$\rho(\xi)\fk_xh=\fk_{\xi\cdot x}h$ for all $\xi\in\Gamma$, $x\in X$, $h\in\cH$.
\end{itemize}

In addition, in case any of the assertions \emph{(1)}, \emph{(2)}, or
\emph{(3)} holds,  
then a minimal $\Gamma$-invariant VE-space linearisation can be constructed,
any minimal $\Gamma$-invariant VE-space linearisation is unique up to unitary 
equivalence, 
a pair $(\cR;\rho)$ as in assertion \emph{(3)} with $\cR$ 
minimal can be always obtained and, in this case, it is uniquely 
determined by $\fk$ as well.
\end{theorem}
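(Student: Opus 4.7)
The plan is to establish the cycle (2)$\Rightarrow$(1), (1)$\Rightarrow$(2), and (2)$\Leftrightarrow$(3), and then deal with minimality and uniqueness at the end. The implication (2)$\Rightarrow$(1) is essentially already recorded in the remarks preceding the theorem: positive semidefiniteness follows at once from $\fk(x,y)=V(x)^*V(y)$, while invariance is exactly the computation in \eqref{e:keyxi}. The equivalence (2)$\Leftrightarrow$(3) reduces, via Proposition~\ref{p:lvsrk}, to transporting the representation through the canonical unitary $U\colon \cK\to\cR$, $Uf=V(\cdot)^*f$: setting $\rho(\xi)=U\pi(\xi)U^*$ gives a $*$-representation on $\cR$, and since $\fk_x h=V(\cdot)^*V(x)h=UV(x)h$, one checks $\rho(\xi)\fk_xh=UV(\xi\cdot x)h=\fk_{\xi\cdot x}h$. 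The reverse direction reverses the same dictionary.

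The bulk of the work lies in (1)$\Rightarrow$(2), and this is where I expect the only genuine difficulty. The plan is the classical Kolmogorov-type quotient construction, adapted to the VE-space setting. Consider $\cF_0=\cF_0(X;\cH)$ equipped with the pairing $[\cdot,\cdot]_\fk$ defined in \eqref{e:prodka}. Since $\fk$ is positive semidefinite, this pairing is a positive semidefinite sesquilinear $Z$-valued form, so Lemma~\ref{l:sesqui} applies. Let
\begin{equation*}
\cN=\{f\in\cF_0\mid [f,f]_\fk=0\},
\end{equation*}
which, by Lemma~\ref{l:sesqui}, coincides with $\{f\in\cF_0\mid [f,g]_\fk=0\text{ for all }g\in\cF_0\}$, and is therefore a linear subspace. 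The quotient $\cK=\cF_0/\cN$ inherits a well-defined $Z$-gramian from $[\cdot,\cdot]_\fk$ and becomes a VE-space over $Z$. Define $V\colon X\to\cL(\cH,\cK)$ by $V(x)h=\delta_xh+\cN$, where $\delta_xh\in\cF_0$ is $h$ at $x$ and $0$ elsewhere; a direct computation using Hermiticity of $\fk$ (which follows from positive semidefiniteness and Lemma~\ref{l:twopos}(1)) shows $V(x)$ is adjointable with $V(x)^*(f+\cN)=\sum_{z}\fk(x,z)f(z)$, and hence $V(y)^*V(x)=\fk(y,x)$. Minimality (ikd4) is immediate since every $f\in\cF_0$ is a finite sum of $\delta_xh$'s.

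The heart of the matter is constructing $\pi$. For $\xi\in\Gamma$ define $\widetilde\pi(\xi)\colon \cF_0\to\cF_0$ on generators by $\widetilde\pi(\xi)(\delta_xh)=\delta_{\xi\cdot x}h$ and extend linearly. The pivotal computation, which directly uses the invariance hypothesis \eqref{e:invariant}, is
\begin{equation*}
[\widetilde\pi(\xi)\delta_xh,\delta_{x'}h']_\fk=[\fk(x',\xi\cdot x)h,h']_\cH=[\fk(\xi^*\cdot x',x)h,h']_\cH=[\delta_xh,\widetilde\pi(\xi^*)\delta_{x'}h']_\fk,
\end{equation*}
so by bilinearity $[\widetilde\pi(\xi)f,g]_\fk=[f,\widetilde\pi(\xi^*)g]_\fk$ for all $f,g\in\cF_0$. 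The main obstacle, which this identity resolves, is showing that $\widetilde\pi(\xi)$ descends to the quotient: if $f\in\cN$ then for every $g\in\cF_0$ we have $[\widetilde\pi(\xi)f,g]_\fk=[f,\widetilde\pi(\xi^*)g]_\fk=0$ by Lemma~\ref{l:sesqui}, so $\widetilde\pi(\xi)f\in\cN$. Thus $\pi(\xi)\colon \cK\to\cK$ is well-defined, and the same identity yields $\pi(\xi)^*=\pi(\xi^*)$, in particular $\pi(\xi)\in\cL^*(\cK)$. Multiplicativity $\pi(\alpha)\pi(\beta)=\pi(\alpha\beta)$ follows from $\delta_{\alpha\cdot(\beta\cdot x)}=\delta_{(\alpha\beta)\cdot x}$, i.e.\ from \eqref{e:action}. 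The intertwining $V(\xi\cdot x)=\pi(\xi)V(x)$ is built into the definition.

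Finally, for the uniqueness assertion, suppose $(\cK;\pi;V)$ and $(\cK';\pi';V')$ are two minimal $\Gamma$-invariant VE-space linearisations of $\fk$. The uniqueness of the underlying minimal VE-space linearisation, already recorded in Subsection~\ref{ss:kvao} via formula \eqref{e:udef}, produces a unitary $U\colon \cK\to\cK'$ with $UV(x)=V'(x)$ for all $x\in X$. Using (ikd3) for both triples and minimality (ikd4), one verifies $U\pi(\xi)=\pi'(\xi)U$ on the generating set $V(X)\cH$ and extends by linearity. Uniqueness of the minimal reproducing kernel pair $(\cR;\rho)$ in (3) then follows from the correspondence of Proposition~\ref{p:lvsrk} together with Remark~\ref{r:lvsrk}.
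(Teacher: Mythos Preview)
Your proof is correct and complete. The overall architecture---cycle through (2)$\Rightarrow$(1), (1)$\Rightarrow$(2), (2)$\Leftrightarrow$(3), plus uniqueness via \eqref{e:udef}---matches the paper's, and your handling of (2)$\Leftrightarrow$(3) through Proposition~\ref{p:lvsrk} and the unitary $U$ is exactly what the paper does.

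The one genuine difference is in the construction for (1)$\Rightarrow$(2). You take the classical Kolmogorov quotient: form $\cK=\cF_0/\cN$ with $\cN=\{f:[f,f]_\fk=0\}$, and define $\pi(\xi)$ by $\delta_xh\mapsto\delta_{\xi\cdot x}h$ on the quotient. The paper instead works with the \emph{range} $\cG$ of the convolution operator $K\colon\cF_0\to\cF$, $(Kg)(y)=\sum_x\fk(y,x)g(x)$, equips $\cG$ directly with a gramian (no quotient needed, since the gramian on $\cG$ is already nondegenerate), and defines $\pi(\xi)$ on functions by $(\pi(\xi)f)(y)=f(\xi^*\cdot y)$. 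The two constructions are canonically isomorphic via $g+\cN\mapsto Kg$ (one checks $\ker K=\cN$), and your $\pi$ and the paper's correspond under this isomorphism. What the paper's route buys is that $\cK=\cG$ is from the outset a space of $\cH$-valued functions on $X$, so it \emph{is} the minimal reproducing kernel space $\cR_\fk$; this makes (2)$\Leftrightarrow$(3) essentially a tautology rather than a transport through $U$ (cf.\ Remark~\ref{r:vhinvkolmo}). Your quotient approach is slightly more familiar and makes the well-definedness of $\pi$ on the quotient (via the adjoint identity and Lemma~\ref{l:sesqui}) perhaps more transparent. Either way, the substance is the same.
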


\begin{proof} (1)$\Ra$(2).
Assuming that $\fk$ is positive semidefinite, by 
Lemma~\ref{l:twopos}.(1) it follows that $\fk$ is Hermitian, that is, 
$\fk(x,y)^*=\fk(y,x)$ for all $x,y\in X$.
We consider the convolution 
operator $K$ defined at \eqref{e:convop} and let $\cG=\cG(X;\cH)$ be its 
range, more precisely,
\begin{align}\label{e:fezero} \cG & =\{f\in\cF\mid f=Kg\mbox{ for some }
g\in\cF_0\}\\
& = \{f\in\cF\mid f(y)=\sum_{x\in X} \fk(y,x)g(x)\mbox{ for some }g\in\cF_0
\mbox{ and all } 
x\in X\}.\nonumber
\end{align}
A pairing $[\cdot,\cdot]_{\cG}\colon \cG\times \cG\ra Z$ can be defined by
\begin{align}\label{e:ipfezero} 
[e,f]_{\cG} &=[Kg,h]_{\cF_0}=\sum_{y\in X}[e(y),h(y)]_{\cH}\\
& =
\sum_{x,y\in X} [\fk(y,x)g(x),h(y)]_\cH,\nonumber
\end{align} where $f=Kh$ and $e=Kg$ for some $g,h\in\cF_0$. 
We observe that
\begin{align*} [e,f]_{\cG} & =\sum_{y\in X}[e(y),h(y)]_{\cH}
=\sum_{x,y\in X} [\fk(y,x)g(x),h(y)]_\cH\\ & = \sum_{x,y\in X} [g(x),\fk(x,y)h(y)]_\cH
=\sum_{x\in X} [g(x),f(x)]_\cH,
\end{align*} which shows that the definition in \eqref{e:ipfezero} 
is correct, that is, independent of $g$ and $h$ such that 
$e=Kg$ and $f=Kh$.

We claim that $[\cdot,\cdot]_{\cG}$ is a $Z$-valued gramian, that is, 
it satisfies all the requirements (ve1)--(ve3). 
The only fact that needs a proof is $[f,f]_{\cG}=0$ 
implies $f=0$ and this follows by Lemma~\ref{l:sesqui}. 

Thus, $(\cG;[\cdot,\cdot]_{\cG})$ is a VE-space that we denote by $\cK$.
For each $x\in X$ we define $V(x)\colon \cH\ra\cG$ by
\begin{equation}\label{e:defvex} V(x) h=Kh_x,\quad h\in\cH,
\end{equation} where $h_x=\delta_x h\in\cF_0$ is the function that takes the
value $h$ at $x$ and is null elsewhere.
Equivalently,
\begin{equation}\label{e:vexa} (V(x) h)(y)=(Kh_x)(y)
=\sum_{z\in X} \fk(y,z)(h_x)(z)=\fk(y,x)h,\ \ y\in X.
\end{equation}

Note that $V(x)$ is an operator from the VE-space $\cH$ to the 
VE-space $\cG=\cK$ and it remains to show that $V(x)$ is adjointable 
for all $x\in X$. To see this, let us fix $x\in X$ 
and take $h\in\cH$ and $f\in\cG$ arbitrary. Then,
\begin{equation}\label{e:vexadj} [V(x)h,f]_{\cG}=\sum_{y\in X} 
[(h_x)(y),f(y)]_\cH=[h,f(x)]_\cH,
\end{equation} which shows that $V(x)$ is adjointable and that its adjoint
$V(x)^*$ is the operator $\cG\ni f\mapsto f(x)\in \cH$ of evaluation at $x$.

On the other hand, for any $x,y\in X$, by
\eqref{e:vexa}, we have
\begin{equation*} V(y)^*V(x)h=(V(x)h)(y) =\fk(y,x)h,\quad h\in\cH,
\end{equation*} hence $(V;\cK)$ is a VE-space linearisation of $\fk$. We prove that 
it is minimal as well. To see this, note that a typical element in the linear
span of $V(X)\cH$ is, for arbitrary $n\in\NN$, $x_1,\ldots,x_n\in X$, and
$h_i,\ldots,h_n\in \cH$,
\begin{align*} \sum_{j=1}^n V(x_j)h_j & = \sum_{j=1}^n Kh_{j,x_j} \\
& = \sum_{j=1}^n \sum_{y\in X} \fk(\cdot,y)h_{j,x_j}(y) =\sum_{j=1}^n
  \fk(\cdot,x_j)h_j,
\end{align*} and then take into account that $\cG$ is the range of the
convolution operator $K$ defined at \eqref{e:convop}.
The uniqueness of the minimal VE-space linearisation $(V;\cK)$ just 
constructed follows as in \eqref{e:udef}.

For each $\xi\in\Gamma$ we let 
$\pi(\xi)\colon \cF\ra\cF$ be 
defined by
\begin{equation} (\pi(\xi)f)(y)=f(\xi^*\cdot y),\quad y\in X,\ \xi\in\Gamma.
\end{equation}
We prove that $\pi(\xi)$ leaves $\cG$ invariant. 
To see this, let $f\in\cG$, that is, $f=Kg$ for 
some $g\in\cF_0$ or, even more explicitly, by \eqref{e:fezero},
\begin{equation}f(y)=\sum_{x\in X} \fk(y,x)g(x),\quad y\in X.
\end{equation}
Then, 
\begin{align}\label{e:fexistar} 
f(\xi^*\cdot y) & =\sum_{x\in X}\fk(\xi^*\cdot y,x)g(x)\\
& =\sum_{x\in X}\fk(y,\xi
\cdot x)g(x)=\sum_{z\in X} \fk(y,z)g^\xi(z),\nonumber
\end{align} where,
\begin{equation} g^\xi(z)=\begin{cases} 0,& \mbox{ if }\xi\cdot x=z
\mbox{ has no solution }x\in \supp g,\\ \sum\limits_{\xi\cdot x=z} g(x), & 
\mbox{ otherwise.}
\end{cases}
\end{equation} Since $g^\xi \in \cF_0$,
it follows
that $\pi(\xi)$ leaves $\cG$ invariant. In the following we denote by 
the same symbol $\pi(\xi)$ the map $\pi(\xi)\colon \cG\ra\cG$.

We prove that $\pi$ is a representation of the semigroup $\Gamma$ on 
the complex vector space $\cG$, that is, 
\begin{equation}\label{e:piab}
\pi(\alpha\beta)f=\pi(\alpha)\pi(\beta)f,\quad \alpha,\beta\in \Gamma,\ 
f\in\cG.
\end{equation}
To see this, let $f\in\cG$ be fixed and denote $h=\pi(\beta)f$, that is, 
$h(y)=f(\beta^*\cdot y)$ for all $y\in X$. 
Then $\pi(\alpha)\pi(\beta)f=\pi(\alpha)h$, that 
is, $(\pi(\alpha)h)(y)=h(\alpha^*\cdot y)=h(\beta^*\alpha^*\cdot y)
=h((\alpha\beta)^*\cdot 
y)=(\pi(\alpha\beta))(y)$, for all $y\in X$, which proves \eqref{e:piab}

We show that $\pi$ is actually a $*$-representation, that is,
\begin{equation}\label{e:piastar} [\pi(\xi)f,f']_{\cG}
=[f,\pi(\xi^*)f']_{\cG},\quad f,f'\in \cG.
\end{equation}
To see this, let $f=Kg$ and $f'=Kg'$ for some $g,g'\in\cF_0$. Then, recalling 
\eqref{e:ipfezero} and \eqref{e:fexistar}, 
\begin{align*} [\pi(\xi)f,f']_{\cG} & = \sum_{y\in X}[f(\xi^*y),g'(y)]_\cH 
= \sum_{x,y\in X}[\fk(\xi^*\cdot y,x)g(x),g'(y)]_\cH \\
& = \sum_{x,y\in X}[\fk(y,\xi\cdot x)g(x),g'(y)]_\cH 
= \sum_{x,y\in X}[g(x),\fk(\xi\cdot x,y)g'(y)]_\cH \\
& = \sum_{x\in X}[g(x),f'(\xi\cdot x)]_\cH
 = [f,\pi(\xi^*)f']_\cH,
\end{align*} and hence the formula \eqref{e:piastar} is proven.

In order to show that the axiom (ikd3) holds as well, we use \eqref{e:vexa}. 
Thus, for all $\xi\in\Gamma$, $x,y\in X$, $h\in\cH$, and taking into account 
that $\fk$ is 
invariant under the action of $\Gamma$ on $X$, we have
\begin{align}\label{e:vexic} 
(V(\xi\cdot x)h)(y) & =\fk(y,\xi\cdot x)h=\fk(\xi^*\cdot y,x)h \\
 & =(V(x)h)(\xi^*\cdot y)=(\pi(\xi) V(x)h)(y),\nonumber
\end{align} which proves (ikd3). Thus, $(\cK;\pi;V)$, here constructed, 
is a $\Gamma$-invariant VE-space linearisation of the Hermitian kernel $\fk$.
Note that $(\cK;\pi;V)$ is minimal, that is, the axiom (ikd4) holds, 
since the VE-space linearisation
$(\cK;V)$ is minimal.

Let $(\cK';\pi';V')$ be another minimal invariant VE-space linearisation of 
$K$. We consider the unitary operator $U\colon \cK\ra\cK'$ defined as in 
\eqref{e:udef} and we already know that $UV(x)=V'(x)$ for all $x\in X$. Since, 
for any $\xi\in\Gamma$, $x\in X$, and $h\in\cH$, we have
\begin{equation*}U\pi(\xi)V(x)h=UV(\xi\cdot x)h=V'(\xi\cdot x)h=\pi'(\xi)V'(x)h
=\pi'(\xi)UV(x)h,
\end{equation*} and taking into account the minimality, it follows that 
$U\pi(\xi)=\pi'(\xi)U$ for all $\xi\in\Gamma$.

(2)$\Ra$(1). Let $(\cK;\pi;V)$ be a $\Gamma$-invariant VE-space linearisation of $\fk$.
Then
\begin{align*} \sum_{j,i=1}^n [\fk(x_i,x_j)h_j,h_i]_\cH & = \sum_{j,i=1}^n [V(x_i)^* 
V(x_j)h_j,h_i]_\cH \\ & = [\sum_{j=1}^n V(x_j)h_j,\sum_{j=1}^n V(x_j)h_j]_\cH\geq 0,
\end{align*} for all $n\in\NN$, $x_1,\ldots,x_n\in X$, and $h_1,\ldots,h_n\in
\cH$, hence $\fk$ is positive semidefinite.
It was shown in \eqref{e:keyxi} that $\fk$ is invariant under 
the action of $\Gamma$ on $X$.

(2)$\Ra$(3). This follows from Proposition~\ref{p:lvsrk} with the following
observation: with notation as in the proof of that proposition, 
for all $x,y\in X$ and $h\in\cH$ we have
\begin{equation*} \fk_{\xi\cdot x}(y)h=\fk(y,\xi\cdot x)h=V(y)^*V(\xi\cdot x)h
=V(y)^*\pi(\xi) 
V(x)h,
\end{equation*} hence, letting $\rho(\xi)=U\pi(\xi)U^{-1}$, 
where $U\colon\cK\ra\cR$ is 
the unitary operator defined as in \eqref{e:defuv}, we obtain a 
$*$-representation of $\Gamma$ on the VH-space $\cR$ 
such that $\fk_{\xi\cdot x}=\rho(\xi)\fk_x$ for all 
$\xi\in\Gamma$ and $x\in X$.

(3)$\Ra$(2). Let $\rho\colon\Gamma\ra\cL^*(\cR)$ is a $*$-representation 
such that 
$\fk_{\xi\cdot x}=\rho(\xi)\fk_x$ for all $\xi\in\Gamma$ and $x\in X$.
Again, we use Proposition~\ref{p:lvsrk}. Letting $\pi=\rho$, it is 
then easy to see that $(\cR;\pi;V)$ is a $\Gamma$-invariant VE-space 
linearisation of the kernel $\fk$.
\end{proof}

\begin{remarks}\label{r:vhinvkolmo}
 (1) Given $\fk\colon X\times X\ra\cL^*(\cH)$ a positive 
semidefinite kernel, 
as a consequence of Theorem~\ref{t:vhinvkolmo} 
we can denote, without any ambiguity, by $\cR_\fk$ 
the unique minimal 
$\cH$-reproducing kernel VE-space on $X$ associated to $\fk$.

(2) The 
construction in the proof of (1)$\Ra$(2) in Theorem~\ref{t:vhinvkolmo} 
is essentially a minimal
$\cH$-reproducing kernel VE-space 
one. 
More precisely,  we first note that,
for arbitrary $f\in\cF(X;\cH)$, $f=Kg$ with $g\in\cF_0(X;\cH)$, we have
\begin{equation}f=\sum_{x\in X} \fk(y,x)g(x)=\sum_{x\in X} \fk_x(y) g(x),
\end{equation} hence $\cG(X;\cH)=\lin\{\fk_x h\mid x\in X,\ h\in\cH\}$. 
Then, for 
arbitrary $f\in\cG$ we have
\begin{align*}[f,\fk_x h]_\cK & =[f,\fk_x h]_{\cG}=[f,K h_x]_{\cG} 
= \sum_{y\in X}
[f(y), (h_x)(y)]_\cH\\ 
&= [f(x),h]_\cH=[f,\fk_x h]_{\cR(K)},\quad x\in X,\ h\in\cH,
\end{align*} hence 
$[\cdot,\cdot]_{\cK}=[\cdot,\cdot]_{\cR(K)}$ on $\cG(X;\cH)=
\lin\{\fk_x h\mid x\in X,\ h\in\cH\}$, that coincides with both $\cK$ and
$\cR(K)$. 
Therefore, we can take $\cK=\cR(K)=\cG(X;\cH)$ to be a VE-space, with 
the advantage that it consists entirely of $\cH$-valued functions on $X$.

This idea was used in \cite{Gheondea} as well and the source 
of inspiration is \cite{BSzNagy}.
\end{remarks}

\section{Results that Theorem~\ref{t:vhinvkolmo} Unifies}\label{s:rtu}

In this section we obtain, as consequences of the main result, different 
versions of known dilation theorems in non topological versions.

\subsection{Positive Semidefinite Maps on $*$-Semigroups.}
Given a VE-space $\cH$ over an ordered $*$-space $Z$ 
and a $*$-semigroup $\Gamma$, a map 
$\phi\colon \Gamma\ra\cL^*(\cH)$ is called \emph{positive semidefinite} 
or \emph{of positive type} if, for all $n\in\NN$, $\xi_1,\ldots,\xi_n\in\Gamma$ 
and $h_1,\ldots,h_2\in\cH$, we have
\begin{equation}\label{e:ispos} \sum_{i,j=1}^n [\phi(\xi_i^*\xi_j)h_j,h_i]_\cH
\geq 0.
\end{equation}

Given a map $\phi\colon\Gamma\ra\cL^*(\cH)$ we consider the kernel 
$\fk\colon\Gamma\times \Gamma\ra\cL^*(\cH)$ defined by
\begin{equation}\label{e:kaunu} \fk(\alpha,\beta)=\phi(\alpha^*\beta),
\quad \alpha,\beta\in\Gamma,
\end{equation} and observe that $\phi$ is positive semidefinite, 
in the sense of 
\eqref{e:ispos}, if and only if 
$\fk$ is positive semidefinite, in the sense 
of \eqref{e:npos}.

On the other hand, considering the action of $\Gamma$ on itself by left 
multiplication, the kernel $\fk$, as defined at \eqref{e:kaunu}, is 
$\Gamma$-invariant, in the sense of \eqref{e:invariant}. Indeed,
\begin{equation*}\fk(\xi,\alpha\cdot\zeta)=\phi(\xi^*\alpha\zeta)
=\phi((\alpha^*\xi)^*\zeta)=\fk(\alpha^*\cdot\xi,\zeta),\quad \alpha,\xi,\zeta
\in\Gamma.
\end{equation*}
Therefore, the following corollary is a direct consequence of 
Theorem~\ref{t:vhinvkolmo}.

\begin{corollary}\label{c:sznagy1} Let $\phi\colon \Gamma\ra \cL^*(\cH)$ be a 
map, for some $*$-semigroup $\Gamma$ and some VE-space $\cH$ over an
ordered $*$-space $Z$. The following assertions are equivalent:
\begin{itemize}
\item[(1)] The map $\phi$ is positive semidefinite.
\item[(2)] There exists a VE-space $\cK$ over $Z$, 
a map $V\colon\Gamma\ra\cL^*(\cH,\cK)$, and a $*$-representation 
$\pi\colon \Gamma\ra\cL^*(\cK)$, such that:
\begin{itemize}
\item[(i)] $\phi(\xi^*\zeta)=V(\xi)^*V(\zeta)$ for all $\xi,\zeta\in\Gamma$.
\item[(ii)] $V(\xi\zeta)=\pi(\xi)V(\zeta)$ for all $\xi,\zeta\in\Gamma$.
\end{itemize}
\end{itemize}
In addition, if this happens, then the triple $(\cK;\pi;V)$ can always 
be chosen 
minimal, in the sense that $\cK$ is the linear span of the set 
$V(\Gamma)\cH$, and any two minimal triples as before are unique, modulo 
unitary equivalence.
\begin{itemize}
\item[(3)] There exist an $\cH$-reproducing kernel VE-space $\cR$ on 
$\Gamma$ and a 
$*$-representation $\rho\colon\Gamma\ra\cL^*(\cR)$ such that:
\begin{itemize}
\item[(i)] $\cR$ has the reproducing kernel $\Gamma\times\Gamma
\ni(\xi,\zeta)\mapsto \phi(\xi^*\zeta)\in\cL^*(\cH)$.
\item[(ii)] $\rho(\alpha)\phi(\cdot\xi)h=\phi(\cdot\alpha\xi)h$ for all $
\alpha,\xi\in\Gamma$ and $h\in\cH$.
\end{itemize}
\end{itemize}
In addition, the reproducing kernel VE-space $\cR$ as in (3) can be 
always constructed minimal and in this case it is uniquely determined by 
$\phi$.
\end{corollary}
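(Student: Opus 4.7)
The strategy is to reduce the corollary to a direct application of Theorem~\ref{t:vhinvkolmo}. I would set $X=\Gamma$, equip $\Gamma$ with the action of itself by left multiplication ($\alpha\cdot\zeta:=\alpha\zeta$), and introduce the kernel $\fk\colon\Gamma\times\Gamma\to\cL^*(\cH)$ defined by $\fk(\alpha,\beta)=\phi(\alpha^*\beta)$, as in \eqref{e:kaunu}. Two key observations, both noted in the paragraph preceding the corollary, are: (a) $\phi$ satisfies \eqref{e:ispos} if and only if $\fk$ satisfies \eqref{e:npos} (this is immediate from the definitions, simply by unpacking the double sum); and (b) $\fk$ is $\Gamma$-invariant under this action, since
\begin{equation*}
\fk(\xi,\alpha\cdot\zeta)=\phi(\xi^*\alpha\zeta)=\phi((\alpha^*\xi)^*\zeta)=\fk(\alpha^*\cdot\xi,\zeta),
\quad \alpha,\xi,\zeta\in\Gamma.
\end{equation*}

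Next, I would invoke Theorem~\ref{t:vhinvkolmo}. For the equivalence (1)$\Leftrightarrow$(2): applying the equivalence (1)$\Leftrightarrow$(2) of the theorem to $\fk$ yields a $\Gamma$-invariant VE-space linearisation $(\cK;\pi;V)$. Axiom (kd2) reads $V(\xi)^*V(\zeta)=\fk(\xi,\zeta)=\phi(\xi^*\zeta)$, which is (2)(i), while (ikd3) reads $V(\xi\zeta)=V(\xi\cdot\zeta)=\pi(\xi)V(\zeta)$, which is (2)(ii). Conversely, any triple $(\cK;\pi;V)$ satisfying (i) and (ii) is by the same identifications a $\Gamma$-invariant VE-space linearisation of $\fk$, so $\fk$ is positive semidefinite by Theorem~\ref{t:vhinvkolmo}, and hence so is $\phi$. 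Minimality in the sense $\lin V(\Gamma)\cH=\cK$ is precisely (ikd4), and uniqueness up to unitary equivalence is inherited verbatim from the theorem.

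For the equivalence (1)$\Leftrightarrow$(3), I would apply the equivalence (1)$\Leftrightarrow$(3) of Theorem~\ref{t:vhinvkolmo} to $\fk$. The resulting reproducing kernel VE-space $\cR$ has reproducing kernel $\fk(\xi,\zeta)=\phi(\xi^*\zeta)$, which is (3)(i), and the intertwining relation $\rho(\alpha)\fk_\xi h=\fk_{\alpha\cdot\xi}h=\fk_{\alpha\xi}h$ becomes (3)(ii) after unpacking $\fk_\xi(\cdot)=\phi((\cdot)^*\xi)$. Existence and uniqueness of a minimal $\cR$ are again inherited from the theorem.

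There is no real obstacle here: the corollary is essentially a dictionary translation of Theorem~\ref{t:vhinvkolmo} under the identifications $X=\Gamma$, left multiplication action, and $\fk(\alpha,\beta)=\phi(\alpha^*\beta)$. The only point requiring care is consistency of the translation, in particular verifying that the invariance axiom \eqref{e:invariant} for $\fk$ corresponds exactly to no condition on $\phi$ beyond its original definition, so that the assertion ``$\phi$ is positive semidefinite'' alone is sufficient input for Theorem~\ref{t:vhinvkolmo}.
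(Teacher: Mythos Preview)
Your proposal is correct and follows exactly the approach of the paper: the paragraph preceding the corollary already introduces the kernel $\fk(\alpha,\beta)=\phi(\alpha^*\beta)$, verifies that positive semidefiniteness of $\phi$ is equivalent to that of $\fk$, checks $\Gamma$-invariance under left multiplication, and then declares the corollary a direct consequence of Theorem~\ref{t:vhinvkolmo}. Your write-up simply makes the dictionary translation explicit, which the paper leaves to the reader.
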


As can be observed from condition (2).(i) in Corollary~\ref{c:sznagy1}, we
do not have a representation of $\phi$ on the whole $*$-semigroup 
$\Gamma$ but only on its $*$-subsemigroup 
$\{\xi^*\zeta\mid \xi,\zeta\in\Gamma\}$, which may be strictly smaller than 
$\Gamma$. This situation can be remedied, for example,
in case the $*$-semigroup $\Gamma$ has a unit, when the previous corollary 
takes a form similar with B.~Sz.-Nagy Theorem, cf.\ \cite{BSzNagy}.

\begin{corollary}\label{c:sznagy2} Assume that the $*$-semigroup 
$\Gamma$ has a unit $\epsilon$.  
Let $\phi\colon \Gamma\ra \cL^*(\cH)$ be a 
map, for some VE-space $\cH$ over an ordered $*$-space $Z$. 
The following assertions are equivalent:
\begin{itemize}
\item[(1)] The map $\phi$ is positive semidefinite.
\item[(2)] There exist a VE-space $\cK$ over $Z$, 
a linear operator $W\in\cL^*(\cH,\cK)$, and a unital $*$-representation 
$\pi\colon \Gamma\ra\cL^*(\cK)$, such that:
\begin{equation}\label{e:kapsa} \phi(\alpha)=W^*\pi(\alpha)W,\quad 
\alpha\in\Gamma.
\end{equation}
\end{itemize}
In addition, if this happens, then the triple $(\cK;\pi;V)$ can always be chosen 
minimal, in the sense that $\cK$ is the linear span of the set 
$\pi(\Gamma)W\cH$, and any two minimal triples as before are unique, 
modulo unitary equivalence.
\end{corollary}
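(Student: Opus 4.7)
The plan is to reduce this to Corollary~\ref{c:sznagy1} by exploiting the unit $\epsilon \in \Gamma$, which lets us replace the map $V\colon\Gamma\to\cL^*(\cH,\cK)$ with a single adjointable operator $W\in\cL^*(\cH,\cK)$ together with the $*$-representation $\pi$ acting on $W\cH$.

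For (2)$\Rightarrow$(1), I would proceed by direct calculation: given $(\cK;\pi;W)$ satisfying \eqref{e:kapsa}, for arbitrary $\xi_1,\dots,\xi_n\in\Gamma$ and $h_1,\dots,h_n\in\cH$, use $\pi(\xi_i^*\xi_j)=\pi(\xi_i)^*\pi(\xi_j)$ to rewrite
\begin{equation*}
\sum_{i,j=1}^n [\phi(\xi_i^*\xi_j)h_j,h_i]_\cH
 = \Bigl[\sum_{j=1}^n \pi(\xi_j)Wh_j,\sum_{i=1}^n \pi(\xi_i)Wh_i\Bigr]_\cK \geq 0,
\end{equation*}
which is the positivity condition \eqref{e:ispos}.

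For (1)$\Rightarrow$(2), I would invoke Corollary~\ref{c:sznagy1} to obtain a minimal triple $(\cK;\pi;V)$ with $\phi(\xi^*\zeta)=V(\xi)^*V(\zeta)$ and $V(\xi\zeta)=\pi(\xi)V(\zeta)$ for all $\xi,\zeta\in\Gamma$. The idea is to set $W:=V(\epsilon)\in\cL^*(\cH,\cK)$. Two things need to be verified. First, that $\pi$ is unital: for every $\zeta\in\Gamma$, condition (ii) gives $V(\zeta)=V(\epsilon\zeta)=\pi(\epsilon)V(\zeta)$, so $\pi(\epsilon)$ fixes every element of $V(\Gamma)\cH$ and, by the minimality condition $\cK=\lin V(\Gamma)\cH$, it follows that $\pi(\epsilon)=I_\cK$. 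Second, that $\phi(\alpha)=W^*\pi(\alpha)W$: using $\alpha\epsilon=\alpha$ and $\epsilon^*=\epsilon$, we get $W^*\pi(\alpha)W=V(\epsilon)^*\pi(\alpha)V(\epsilon)=V(\epsilon)^*V(\alpha\epsilon)=V(\epsilon)^*V(\alpha)=\phi(\epsilon^*\alpha)=\phi(\alpha)$. Minimality in the new sense follows from $\pi(\xi)W h = \pi(\xi)V(\epsilon)h=V(\xi\epsilon)h=V(\xi)h$, so $\lin \pi(\Gamma)W\cH=\lin V(\Gamma)\cH=\cK$.

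For uniqueness modulo unitary equivalence, given two minimal triples $(\cK;\pi;W)$ and $(\cK';\pi';W')$ satisfying \eqref{e:kapsa}, I would define $U$ on the dense subspace $\lin\pi(\Gamma)W\cH$ by $U\bigl(\sum_j \pi(\xi_j)Wh_j\bigr)=\sum_j \pi'(\xi_j)W'h_j$, then verify well-definedness and isometry by the computation
\begin{equation*}
\Bigl[\sum_j\pi(\xi_j)Wh_j,\sum_i\pi(\xi_i)Wh_i\Bigr]_\cK
=\sum_{i,j}[\phi(\xi_i^*\xi_j)h_j,h_i]_\cH
=\Bigl[\sum_j\pi'(\xi_j)W'h_j,\sum_i\pi'(\xi_i)W'h_i\Bigr]_{\cK'},
\end{equation*}
so $U$ extends to a VE-space isomorphism $\cK\to\cK'$ intertwining the representations and satisfying $UW=W'$. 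The main conceptual point — not really an obstacle but the crux — is recognising that unitality of $\pi$ is forced by minimality once a unit is available, which is exactly what collapses the family $V(\xi)$ into $\pi(\xi)W$. No new machinery beyond Corollary~\ref{c:sznagy1} is required.
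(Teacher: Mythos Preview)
Your proposal is correct and matches the paper's intended approach: the paper does not give an explicit proof of this corollary but indicates (and makes explicit in the parallel algebra case, Corollary~\ref{c:stinespring2}) that one applies Corollary~\ref{c:sznagy1} and sets $W=V(\epsilon)$, exactly as you do. One minor terminological slip: in the uniqueness argument you write ``dense subspace'' and ``extends'', but in this non-topological VE-space setting minimality gives $\lin\pi(\Gamma)W\cH=\cK$ on the nose, so no extension is involved.
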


\subsection{Positive Semidefinite Linear Maps.}\label{ss:pslm}
Given a $*$-algebra $\cA$, a linear map $\phi\colon\cA\ra\cL^*(\cH)$, for 
some VE-space $\cH$ over an ordered $*$-space $Z$, 
is called \emph{positive semidefinite} if for all $a_1,\ldots,a_n\in\cA$ and all 
$h_1,\ldots,h_n\in \cH$ we have
\begin{equation}\label{e:lpsd}
\sum_{i,j=1}^n [\phi(a_i^*a_j)h_j,h_i]_\cH\geq 0,
\end{equation} where the inequality is understood in $Z$ with respect to the 
given cone $Z^+$ and the underlying partial order, see 
Subsection~\ref{ss:oss}. Observe that for a Hermitian linear map 
$\phi\colon\cA\ra\cL^*(\cH)$ one can define a Hermitian kernel  
$\fk_\phi\colon \cA\times \cA\ra \cL^*(\cH)$  by letting
\begin{equation*} \fk_\phi(a,b)=\phi(a^*b),\quad a,b\in\cA.
\end{equation*} Also, observe that the $*$-algebra $\cA$ can be viewed as 
a multiplicative $*$-semigroup and, letting $\cA$ act on itself by multiplication, 
the kernel $\fk_\phi$ is invariant under this action. With this notation, another 
consequence of Theorem~\ref{t:vhinvkolmo} is the following

\begin{corollary}\label{c:stinespring1} Let $\phi\colon \cA\ra \cL^*(\cH)$ be a 
linear 
map, for some $*$-algebra $\cA$ and some VE-space $\cH$ over an 
ordered $*$-space $Z$. The following assertions are equivalent:
\begin{itemize}
\item[(1)] The map $\phi$ is positive semidefinite.
\item[(2)] There exist a VE-space $\cK$ over the ordered $*$-space $Z$, 
a linear map $V\colon\cA\ra\cL^*(\cH,\cK)$, and a $*$-representation 
$\pi\colon \cA\ra\cL^*(\cK)$, such that:
\begin{itemize}
\item[(i)] $\phi(a^*b)=V(a)^*V(b)$ for all $a,b\in\cA$.
\item[(ii)] $V(ab)=\pi(a)V(b)$ for all $a,b\in\cA$.
\end{itemize}
\end{itemize}
In addition, if this happens, then the triple $(\cK;\pi;V)$ can always be chosen 
minimal, in the sense that $\cK$ is the linear span of the set 
$V(\cA)\cH$, and any two minimal triples as before are unique, modulo 
unitary equivalence.
\begin{itemize}
\item[(3)] There exist an $\cH$-reproducing kernel VE-space $\cR$ on 
$\cA$ and a 
$*$-representation $\rho\colon\cA\ra\cL^*(\cR)$ such that:
\begin{itemize}
\item[(i)] $\cR$ has the reproducing kernel $\cA\times\cA
\ni(a,b)\mapsto \phi(a^*b)\in\cL^*(\cH)$.
\item[(ii)] $\rho(a)\phi(\cdot b)h=\phi(\cdot ab)h$ for all $
a,b\in\cA$ and $h\in\cH$.
\end{itemize}
\end{itemize}
In addition, the reproducing kernel VE-space $\cR$ as in (3) can be 
always constructed minimal and in this case it is uniquely determined by 
$\phi$.
\end{corollary}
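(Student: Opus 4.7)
The plan is to deduce Corollary~\ref{c:stinespring1} as a direct specialization of Theorem~\ref{t:vhinvkolmo} applied to the Hermitian kernel $\fk_\phi\colon\cA\times\cA\ra\cL^*(\cH)$, $\fk_\phi(a,b)=\phi(a^*b)$, where $\cA$ is viewed as a multiplicative $*$-semigroup acting on itself by left multiplication. Comparing \eqref{e:lpsd} with \eqref{e:npos} shows that $\phi$ is positive semidefinite if and only if $\fk_\phi$ is, while the computation
\[\fk_\phi(b,a\cdot c)=\phi(b^*ac)=\phi((a^*b)^*c)=\fk_\phi(a^*\cdot b,c),\quad a,b,c\in\cA,\]
shows that $\fk_\phi$ is automatically $\cA$-invariant in the sense of \eqref{e:invariant}. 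Thus Theorem~\ref{t:vhinvkolmo} yields the equivalence of (1) with the existence of a minimal $\cA$-invariant VE-space linearisation $(\cK;\pi;V)$ of $\fk_\phi$ and with the existence of a minimal $\cH$-reproducing kernel VE-space $\cR$ carrying a compatible $*$-representation $\rho$; the axioms (kd2) and (ikd3) translate verbatim to items (i) and (ii) of assertion (2).

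The main obstacle is to promote the maps $V\colon\cA\ra\cL^*(\cH,\cK)$ and $\pi\colon\cA\ra\cL^*(\cK)$, supplied by Theorem~\ref{t:vhinvkolmo} only as maps of sets, to complex-linear maps on the algebra $\cA$. For $\pi$, linearity of $\phi$ gives the kernel identity
\[\fk_\phi(y,(\lambda a_1+\mu a_2)\cdot x)=\lambda\fk_\phi(y,a_1\cdot x)+\mu\fk_\phi(y,a_2\cdot x),\quad x,y\in\cA,\]
and Proposition~\ref{P:vhinvkolmolinear} (together with its evident scalar-multiplication analogue, proved by the same minimality argument) immediately yields $\pi(\lambda a_1+\mu a_2)=\lambda\pi(a_1)+\mu\pi(a_2)$. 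For $V$, the identity $V(b)^*V(a)=\phi(b^*a)$ combined with linearity of $\phi$ shows that
\[w:=V(\lambda a_1+\mu a_2)h-\lambda V(a_1)h-\mu V(a_2)h\in\cK\]
satisfies $[w,V(b)g]_\cK=0$ for every $b\in\cA$ and $g\in\cH$; by minimality the linear span of such vectors $V(b)g$ exhausts $\cK$, hence $[w,w]_\cK=0$ and the definiteness axiom (ve1) forces $w=0$.

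For the equivalence (1)$\Leftrightarrow$(3), the relation $\rho(\xi)\fk_x h=\fk_{\xi\cdot x}h$ from Theorem~\ref{t:vhinvkolmo}(3), applied to the kernel $\fk_\phi$ with $\xi=a$, $x=b$, reads $\rho(a)\phi(\cdot\,b)h=\phi(\cdot\,ab)h$ after unfolding the definition of $\fk_\phi$, and the reproducing kernel of $\cR$ is $(a,b)\mapsto\phi(a^*b)$ by construction. Minimality and uniqueness up to unitary equivalence are inherited verbatim from Theorem~\ref{t:vhinvkolmo}, since any intertwining unitary automatically preserves the linear structure just verified.
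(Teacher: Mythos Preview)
Your proof is correct and follows the same approach as the paper: deduce the corollary from Theorem~\ref{t:vhinvkolmo} applied to the kernel $\fk_\phi(a,b)=\phi(a^*b)$, with the $*$-semigroup $\cA$ acting on itself by left multiplication. You actually go further than the paper by explicitly verifying that $V$ and $\pi$ are complex-linear (via Proposition~\ref{P:vhinvkolmolinear} and the minimality argument for $V$), a point the paper leaves implicit.
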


In case the $*$-algebra has a unit, the previous corollary yields a Stinespring 
type Representation Theorem, cf.\ \cite{Stinespring}, or its generalisations
\cite{Gheondea}. 
More precisely, letting $e$ denote the unit of the $*$-
algebra $\cA$ and with the notation as in Corollary~\ref{c:stinespring1}.(2), 
letting $W=V(e)$, we have

\begin{corollary}\label{c:stinespring2} 
Let $\cA$ be a unital $*$-algebra $\cA$ and 
$\phi\colon\cA\ra\cL^*(\cH)$ a linear map, for 
some VE-space $\cH$ over an ordered $*$-space $Z$. The following 
assertions are equivalent:
\begin{itemize}
\item[(i)] $\phi$ is positive semidefinite.
\item[(ii)] There exist $\cK$ a VE-space over 
the same ordered $*$-space $Z$, a $*$-representation
 $\pi\colon\cA\ra\cL^*(\cK)$, and $W\in\cL^*(\cH,\cK)$ such that 
 \begin{equation}\phi(a)=W^*\pi(a)W\quad a\in\cA.\end{equation}
\end{itemize}
In addition, if this happens, then the triple $(\cK;\pi;W)$ can always be chosen 
minimal, in the sense that $\cK$ is the linear span of the set 
$\pi(\cA)W\cH$, and any two minimal triples as before are unique, 
modulo unitary equivalence.
\end{corollary}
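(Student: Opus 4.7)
The plan is to derive Corollary~\ref{c:stinespring2} directly from Corollary~\ref{c:stinespring1}, which already provides the ``two-variable'' non-unital version $\phi(a^*b)=V(a)^*V(b)$ together with the intertwining relation $V(ab)=\pi(a)V(b)$. The unit $e\in\cA$, which satisfies $e^*=e$, furnishes a canonical way to collapse the linear map $V$ into a single operator $W:=V(e)$.

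For the direction (ii)$\Ra$(i), given $\pi$ and $W$ with $\phi(a)=W^*\pi(a)W$, I would verify positive semidefiniteness by a one-line computation: for $a_1,\dots,a_n\in\cA$ and $h_1,\dots,h_n\in\cH$,
\begin{equation*}
\sum_{i,j=1}^n[\phi(a_i^*a_j)h_j,h_i]_\cH
=\sum_{i,j=1}^n[\pi(a_j)Wh_j,\pi(a_i)Wh_i]_\cK
=\Bigl[\sum_{j=1}^n\pi(a_j)Wh_j,\sum_{i=1}^n\pi(a_i)Wh_i\Bigr]_\cK\geq 0,
\end{equation*}
where I use that $\pi$ is a $*$-homomorphism and that the $\cK$-gramian is positive on the diagonal.

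For the direction (i)$\Ra$(ii), I would apply Corollary~\ref{c:stinespring1} to obtain a minimal triple $(\cK;\pi;V)$ satisfying (2)(i)--(ii) there, then set $W:=V(e)\in\cL^*(\cH,\cK)$. The intertwining relation, applied with $b=e$, yields $V(a)=V(ae)=\pi(a)V(e)=\pi(a)W$ for all $a\in\cA$. Using $e^*=e$ and (2)(i) of Corollary~\ref{c:stinespring1}, I get
\begin{equation*}
\phi(a)=\phi(e^*a)=V(e)^*V(a)=W^*\pi(a)W,\qquad a\in\cA.
\end{equation*}
Additionally, $\pi$ turns out to be unital in this minimal construction: $\pi(e)V(a)h=\pi(e)\pi(a)Wh=\pi(ea)Wh=V(a)h$, and minimality of $(\cK;\pi;V)$ in the sense of Corollary~\ref{c:stinespring1} forces $\pi(e)=I_\cK$.

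Minimality and uniqueness of $(\cK;\pi;W)$ reduce to the corresponding statements for Corollary~\ref{c:stinespring1}. Since $V(a)\cH=\pi(a)W\cH$, the identity $\lin V(\cA)\cH=\lin\pi(\cA)W\cH$ shows that the triple constructed above is minimal in the sense of Corollary~\ref{c:stinespring2}. Conversely, any minimal triple $(\cK';\pi';W')$ produces, via $V'(a):=\pi'(a)W'$, a minimal triple $(\cK';\pi';V')$ fulfilling the hypotheses of Corollary~\ref{c:stinespring1}.(2): indeed $V'(a)^*V'(b)=W'^*\pi'(a^*b)W'=\phi(a^*b)$ and $V'(ab)=\pi'(a)V'(b)$, and $\lin V'(\cA)\cH=\lin\pi'(\cA)W'\cH=\cK'$. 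The unitary intertwiner $U\colon\cK\to\cK'$ delivered by Corollary~\ref{c:stinespring1} then automatically satisfies $UW=UV(e)=V'(e)=\pi'(e)W'=W'$ and $U\pi(a)=\pi'(a)U$, yielding the desired unitary equivalence. I do not expect any real obstacle here; the only mild subtlety is recognising that the passage $V\leftrightarrow(W,\pi)$, mediated by $V(a)=\pi(a)W$ and $W=V(e)$, is a bijection between minimal triples on the two sides, so that both existence and uniqueness transfer cleanly from Corollary~\ref{c:stinespring1}.
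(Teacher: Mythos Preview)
Your proposal is correct and follows exactly the route the paper indicates: the paper's entire argument, given in the sentence preceding the corollary, is to set $W=V(e)$ in the minimal triple $(\cK;\pi;V)$ of Corollary~\ref{c:stinespring1}, and you have simply filled in the details of that reduction. The only point worth making explicit is that in the uniqueness argument your equality $\pi'(e)W'=W'$ relies on $\pi'(e)=I_{\cK'}$, which you proved for the constructed triple but not for the arbitrary one; however, the same one-line argument ($\pi'(e)\pi'(a)W'h=\pi'(a)W'h$ together with minimality) applies verbatim to $(\cK';\pi';W')$, so there is no genuine gap.
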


\begin{remarks}\label{r:cp}
(1) In dilation theory, one encounters also the notion of 
\emph{completely positive}, e.g.\ see \cite{Paulsen}. In our setting, we
can consider a linear map $\phi\colon\cV\ra\cL^*(\cE)$, 
where $\cV$ is a $*$-space and $\cE$ is some VE-space over
an ordered $*$-space $Z$. For each $n$ one can consider the 
$*$-space $M_n(\cV)$ of all $n\times n$ matrices with entries in $\cV$. 
Then the \emph{$n$-th amplification map} 
$\phi_n\colon M_n(\cV)\ra M_n(\cL^*(\cE))=\cL^*(\cE^n)$ is defined by
\begin{equation}\label{e:phn}
\phi_n([a_{i,j}]_{i,j=1}^n)=[\phi(a_{i,j})]_{i,j=1}^n,\quad [a_{i,j}]_{i,j=1}^n\in M_n(\cV).
\end{equation}
Basically, $\phi$ would be called \emph{completely positive} if $\phi_n$
is "positive" for all $n$, where "positive" should mean that, whenever 
$[a_{i,j}]_{i,j=1}^n$ is "positive" in $M_n(\cV)$ then $\phi_n([a_{i,j}]_{i,j=1}^n)$
is positive in $M_n(\cL^*(\cE))$. Since positivity in $M_n(\cL^*(\cE))$ 
is perfectly defined, see Example~\ref{ex:les}.(4), 
the only problem is to define positivity 
in $M_n(\cV)$. One of the possible approaches, e.g.\ see 
\cite{EsslamzadehTurowska}, is
to assume $\cV$ be a \emph{matrix quasi ordered $*$-space},  that is, there
exists $\{C_n\}_{n\geq 1}$ a \emph{matrix quasi ordering} of $\cV$, 
in the following sense
\begin{itemize}
\item[(mo1)] For each $n\in\NN$, $C_n$ is a cone on $M_n(\cV)$.
\item[(mo2)] For each $m,n\in\NN$ and each $m\times n$ matrix with 
complex entries, we have $T^*C_mT\subseteq C_n$, where multiplication is
the usual matrix multiplication.
\end{itemize}
In the special case when (mo1) is changed such 
that for each $n\in\NN$, the cone
$C_n$ is strict, one has the concept of \emph{matrix ordering} and, 
respectively, of \emph{matrix ordered $*$-space}, e.g.\ see
\cite{PaulsenTodorovTomforde}. For example, $\cL^*(\cE)$ has a natural 
structure of matrix ordered $*$-algebra, see Example~\ref{ex:les}.(4).
Observe that, in the latter case, 
each $M_n(\cV)$ is an ordered $*$-space hence, in 
this case, the concept of completely positive map 
$\phi\colon\cV\ra \cL^*(\cE)$ makes perfectly sense. In the former case, that 
of matrix quasi ordered $*$-space $\cV$,
the concept of completely positive map $\phi$ makes sense as well.

(2) Assuming that instead of $\cV$ we have a $*$-algebra $\cA$ and that
the concept of a completely positive map $\phi\colon \cA\ra\cL^*(\cE)$ is 
defined, a natural question is what is the relation of this concept with that of 
positive semidefinite map $\phi$. By inspection, it can be observed that, in 
order to relate the two concepts, the matrix (quasi) ordering on $\cA$ should  
be related with that of $*$-positivity, see Remark~\ref{r:isa}. More precisely,
observe first that $*$-positivity provides in a natural way 
a matrix quasi ordering of $\cA$. Then, one can prove that if $\phi$ is 
completely positive, with definition as in item (1) and with respect to the $*$-
positivity, then $\phi$ is positive semidefinite, with definition as in 
\eqref{e:lpsd}. The converse is even more problematic, depending on
whether any $*$-positive matrix $[a_{i,j}]_{i,j=1}^n$ can be represented 
as a sum of matrices $a^*a$, where $a$ is a special matrix
with only one non-null row. This special situation happens for $C^*$-algebras
 \cite{Stinespring}, or even for locally $C^*$-algebras \cite{Inoue}, 
but it may fail even for pre $C^*$-algebras, in general. 
\end{remarks}

\subsection{Linear Maps with Values Adjointable Operators on VE-Modules.}
\label{ss:lmvaovem}
Given an ordered $*$-algebra $\cA$ and a VE-module $\cE$ over $\cA$, 
an \emph{$\cE$-reproducing kernel VE-module over $\cA$} is just an
$\cE$-reproducing kernel VE-space over $\cA$, with definition as in 
Subsection~\ref{ss:kvao}, which is also a VE-module over $\cA$. 
 
 \begin{proposition}\label{p:vhinvkolmomodule} 
 Let $\Gamma$ be a $*$-semigroup that acts 
on the nonempty set $X$ and let $\fk\colon X\times X\ra\cL^*(\cH)$ 
be a kernel, for some VE-module $\cH$ over an ordered $*$-algebra $\cA$. 
The following assertions are equivalent:

\begin{itemize}
\item[(1)] $\fk$ is positive semidefinite, in the sense of \eqref{e:npos}, 
and invariant under the action of $\Gamma$ on $X$, that is, 
\eqref{e:invariant} holds.
\item[(2)] $\fk$ has a $\Gamma$-invariant VE-module (over $\cA$) 
linearisation $(\cK;\pi;V)$.
\item[(3)] $\fk$ admits an $\cH$-reproducing kernel VE-module $\cR$ and 
there exists a $*$-representation $\rho\colon \Gamma\ra\cL^*(\cR)$ such that 
$\rho(\xi)\fk_xh=\fk_{\xi\cdot x}h$ for all $\xi\in\Gamma$, $x\in X$, $h\in\cH$.
\end{itemize}

In addition, in case any of the assertions \emph{(1)}, \emph{(2)}, or
\emph{(3)} holds,  
then a minimal $\Gamma$-invariant VE-module 
linearisation can be constructed,
any minimal $\Gamma$-invariant VE-module 
linearisation is unique up to unitary equivalence, 
a pair $(\cR;\rho)$ as in assertion \emph{(3)} with $\cR$ 
minimal can be always obtained and, in this case, it is uniquely 
determined by $\fk$ as well.
\end{proposition}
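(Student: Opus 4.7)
The plan is to bootstrap off Theorem~\ref{t:vhinvkolmo} rather than repeat the reproducing-kernel construction. Since every ordered $*$-algebra $\cA$ is in particular an ordered $*$-space, and every VE-module over $\cA$ is, by forgetting the right action, a VE-space over $\cA$, the equivalences (1)$\Leftrightarrow$(2)$\Leftrightarrow$(3) at the VE-space level come for free. What remains is to equip $\cK$ (respectively $\cR$) with a compatible right $\cA$-action, and to verify that both the representation $\pi$ and the map $V$ (respectively $\rho$) respect this action.

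First I would use the explicit minimal model from the proof of Theorem~\ref{t:vhinvkolmo} and Remark~\ref{r:vhinvkolmo}, namely $\cK=\cG(X;\cH)=\operatorname{Ran} K\subseteq \cF(X;\cH)$, and define the right action pointwise by $(f\cdot a)(y)=f(y)a$ for $f\in\cK$, $a\in\cA$, $y\in X$. To see this preserves $\cK$, I would check that if $f=Kg$ with $g\in\cF_0$ then $f\cdot a=K(g\cdot a)$, where $(g\cdot a)(x)=g(x)a$ still has finite support. The module axiom (vem2) on $\cK$ then follows from (vem2) on $\cH$ applied termwise inside \eqref{e:ipfezero}. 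For $V(x)$ to be a module map I would invoke the fact stated just after Examples~\ref{ex:vemod}: every adjointable operator between VE-modules is automatically $\cA$-linear, so $\fk(y,x)(ha)=(\fk(y,x)h)a$, whence $V(x)(ha)=(V(x)h)\cdot a$. For $\pi(\xi)$, a direct computation gives
\begin{equation*}
(\pi(\xi)(f\cdot a))(y)=(f\cdot a)(\xi^*\cdot y)=f(\xi^*\cdot y)a=(\pi(\xi)f)(y)\,a=((\pi(\xi)f)\cdot a)(y),
\end{equation*}
so $\pi(\xi)\in\cL^*(\cK)$ commutes with the right action, confirming that $\cK$ is a VE-module over $\cA$ and that the linearisation is a $\Gamma$-invariant VE-module linearisation. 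Reversing this, if (2) holds with $\cK$ a VE-module then $\fk$ is positive semidefinite and $\Gamma$-invariant by the same argument as in Theorem~\ref{t:vhinvkolmo}, since these properties only involve the underlying VE-space.

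For the equivalence with (3), I would transport the module structure along the unitary $U\colon \cK\to\cR$ of Proposition~\ref{p:lvsrk}; concretely, on the reproducing kernel side one again takes $(f\cdot a)(x)=f(x)a$ for $f\in\cR$, and the generators $\fk_x h$ satisfy $\fk_x h\cdot a=\fk_x(ha)\in\cR$ because $\fk(\cdot,x)\in\cL^*(\cH)$ is $\cA$-linear. Both the reproducing property and the intertwining identity $\rho(\xi)\fk_x h=\fk_{\xi\cdot x}h$ are $\cA$-linear in $h$, so $\rho(\xi)$ is a module map as well.

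The main technical obstacle is psychological rather than computational: one must resist re-doing the whole dilation construction and instead identify precisely the two places where module compatibility needs to be checked, namely (i) that the ambient space $\cG=\operatorname{Ran}K$ is stable under the pointwise right action, and (ii) that adjointability forces $\cA$-linearity for free. Once these are in place, minimality, uniqueness up to unitary equivalence, and the explicit description of $(\cR;\rho)$ are inherited verbatim from Theorem~\ref{t:vhinvkolmo}, since the intertwining unitary produced in \eqref{e:udef} is defined on elementary tensors $\sum_j V(x_j)h_j$ and is therefore automatically a module map between the two minimal VE-module linearisations.
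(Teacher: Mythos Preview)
Your proposal is correct and follows essentially the same route as the paper: both arguments bootstrap off the VE-space construction in Theorem~\ref{t:vhinvkolmo}, endow $\cG(X;\cH)=\operatorname{Ran}K$ with the pointwise right $\cA$-action, use that $K$ is a module map (because each $\fk(y,x)$ is adjointable, hence $\cA$-linear) to see $\cG$ is a submodule, and verify (vem2) termwise. You supply a bit more detail than the paper does---explicitly checking that $\pi(\xi)$ and $\rho(\xi)$ intertwine the right action and that the uniqueness unitary is a module map---but these are the natural completions of the same argument.
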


 \begin{proof} We use the notation as in the proof of 
Theorem~\ref{t:vhinvkolmo}. We actually prove only the implication 
(1)$\Ra$(2) since, as observed in Remark~\ref{r:vhinvkolmo}, that 
construction provides a $\Gamma$-invariant reproducing kernel VE-space
linearisation, while the other implications are not much different.
 
 (1)$\Ra$(2). We first observe that, 
 since $\cH$ is a module over $\cA$,
 the space $\cF(X;\cH)$ has a natural structure of right module over 
 $\cA$, more precisely, for any $f\in\cF(X;\cH)$ and $a\in \cA$
 \begin{equation*} (fa)(x)= f(x)a,\quad x\in X.
 \end{equation*}
 In particular, the space $\cF_0(X;\cH)$ is a submodule of $\cF(X;\cH)$. On
 the other hand, by assumption, for each $x,y\in X$, $\fk(x,y)\in\cL^*(\cH)$, 
 hence $\fk(x,y)$ is a module map. These imply that the convolution operator
 $K\colon\cF_0(X;\cH)\ra\cF(X;\cH)$ defined as in \eqref{e:convop} is a 
module map. Indeed, for any $f\in\cF_0(X;\cH)$, $a\in\cA$, and $y\in X$,
\begin{equation*} ((Kf)a)(x)=\sum_{x\in X}\fk(y,x)f(x)a=K(fa)(x).
\end{equation*}
Then, the space $\cG(X;\cH)$ which, with the definition as in 
\eqref{e:fezero}, is the range of the convolution operator $K$, 
is a module over $\cA$ as well. 

We show that, 
when endowed with the $\cA$ valued gramian $[\cdot,\cdot]_\cG$ defined
as in \eqref{e:ipfezero}, we have
\begin{equation}\label{e:gmp}
[e,fa]_\cG=[e,f]_\cG\, a,\quad e,f\in\cG(X;\cH),\ a\in\cA.
\end{equation}
To see this, let $e=Kg$ and $f=Kh$ for some $g,h\in\cF_0(X;\cH)$. Then,
\begin{equation*}[e,fa]_\cG=[Kg,ha]_{\cF_0}=\sum_{y\in X}
[e(y),h(y)a]_\cH=\sum_{y\in X}[e(y),h(y)]_\cH a=[Kg,h]_{\cF_0}a=[e,f]_\cG a.
\end{equation*}

From \eqref{e:gmp} and the proof of the implication (1)$\Ra$(2) in 
Theorem~\ref{t:vhinvkolmo}, it follows that $\cK=\cG(X;\cH)$ is a 
VE-module over the ordered $*$-algebra $\cA$ and hence, the triple
$(\cK;\pi;V)$ is a minimal $\Gamma$-invariant VE-module linearisation of 
$\fk$.
 \end{proof}
 
\begin{corollary}
\label{c:stinespringmodule}
Let $\phi\colon \cB\ra\cL^*(\cH)$ be a linear map, for some $*$-algebra $\cB$ 
and some VE-module $\cH$ over an ordered $*$-algebra $\cA$. 
The following assertions are equivalent:
\begin{itemize}
\item[(1)] The map $\phi$ is positive semidefinite.
\item[(2)] There exist a VE-module $\cK$ over the ordered $*$-algebra $\cA$, 
a linear map $V\colon\cB\ra\cL^*(\cH,\cK)$, and a $*$-representation 
$\pi\colon \cB\ra\cL^*(\cK)$, such that:
\begin{itemize}
\item[(i)] $\phi(a^*b)=V(a)^*V(b)$ for all $a,b\in\cB$.
\item[(ii)] $V(ab)=\pi(a)V(b)$ for all $a,b\in\cB$.
\end{itemize}
\end{itemize}
In addition, if this happens, then the triple $(\cK;\pi;V)$ 
can always be chosen 
minimal, in the sense that $\cK$ is the linear span of the set 
$V(\cB)\cH$, and any two minimal triples as before are unique, modulo 
unitary equivalence.
\begin{itemize}
\item[(3)] There exist an $\cH$-reproducing kernel VE-module $\cR$ on 
$\cA$ and a 
$*$-representation $\rho\colon\cB\ra\cL^*(\cR)$ such that:
\begin{itemize}
\item[(i)] $\cR$ has the reproducing kernel $\cB\times\cB
\ni(a,b)\mapsto \phi(a^*b)\in\cL^*(\cH)$.
\item[(ii)] $\rho(a)\phi(\cdot b)h=\phi(\cdot ab)h$ for all 
$a,b\in\cB$ and $h\in\cH$.
\end{itemize}
\end{itemize}
In addition, the reproducing kernel VE-module $\cR$ as in (3) can be 
always constructed minimal and in this case it is uniquely determined by 
$\phi$.
\end{corollary}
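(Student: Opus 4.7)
The plan is to reduce Corollary~\ref{c:stinespringmodule} to Proposition~\ref{p:vhinvkolmomodule} by the same device used to derive Corollary~\ref{c:stinespring1} from Theorem~\ref{t:vhinvkolmo}. To this end, I would associate to the linear map $\phi\colon\cB\ra\cL^*(\cH)$ the kernel $\fk_\phi\colon\cB\times\cB\ra\cL^*(\cH)$ defined by $\fk_\phi(a,b)=\phi(a^*b)$, and regard $\cB$ as a $*$-semigroup acting on itself by left multiplication. A direct comparison of \eqref{e:lpsd} with \eqref{e:npos} shows that $\fk_\phi$ is positive semidefinite in the sense of \eqref{e:npos} if and only if $\phi$ is positive semidefinite in the sense of \eqref{e:lpsd}, and, exactly as in Subsection~\ref{ss:pslm}, the identity
\begin{equation*}
\fk_\phi(a,c\cdot b)=\phi(a^*cb)=\phi((c^*a)^*b)=\fk_\phi(c^*\cdot a,b),\quad a,b,c\in\cB,
\end{equation*}
shows that $\fk_\phi$ is invariant under this left-multiplication action.

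With this translation in place, the equivalence (1)$\LRa$(2)$\LRa$(3) of the corollary reads off directly from the corresponding equivalence in Proposition~\ref{p:vhinvkolmomodule} applied to $\fk_\phi$. Indeed, for (1)$\Ra$(2), a $\cB$-invariant VE-module linearisation $(\cK;\pi;V)$ of $\fk_\phi$ yields axioms (i) and (ii) at once: property (kd2) gives $V(a)^*V(b)=\fk_\phi(a,b)=\phi(a^*b)$, and property (ikd3), specialised to the left-multiplication action, gives $V(ab)=V(a\cdot b)=\pi(a)V(b)$. The converse (2)$\Ra$(1) is equally immediate because any triple satisfying (i) and (ii) manufactures a $\cB$-invariant VE-module linearisation of $\fk_\phi$, whose existence forces $\fk_\phi$ (equivalently, $\phi$) to be positive semidefinite. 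The equivalence with (3) is then just the reproducing-kernel reformulation supplied by Proposition~\ref{p:lvsrk} in the module setting, with the intertwining identity $\rho(a)\phi(\cdot b)h=\phi(\cdot ab)h$ being the transcription of the condition $\rho(\xi)\fk_x h=\fk_{\xi\cdot x}h$ under our identification of the kernel.

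Minimality and uniqueness modulo unitary equivalence carry over verbatim from Proposition~\ref{p:vhinvkolmomodule}: the linear span of $V(\cB)\cH$ corresponds to the linearisation condition (ikd4), and the minimal reproducing kernel VE-module $\cR$ is uniquely determined by $\fk_\phi$ (hence by $\phi$), with the correspondence between minimal triples and the minimal reproducing kernel module being that of Remark~\ref{r:lvsrk}.

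I do not anticipate a genuine obstacle here, since all the substantive work — producing the VE-module structure on the dilation space, checking that the convolution operator is a module map, verifying that the associated gramian is right $\cA$-linear in the second variable, and establishing the $*$-representation property together with the intertwining relation — was already carried out in the proof of Proposition~\ref{p:vhinvkolmomodule}. The only point that deserves a moment's care is the bookkeeping that distinguishes the two $*$-algebras involved: the \emph{coefficient} ordered $*$-algebra $\cA$ (over which $\cH$, $\cK$, $\cR$ are VE-modules) and the \emph{domain} $*$-algebra $\cB$ (on which $\phi$ is defined, and which plays the role of $\Gamma=X$ in the invariant kernel framework); these must not be conflated, but no interaction between them is required beyond the observation that $V(a)$ and $\pi(a)$ automatically lie in $\cL^*$, hence are $\cA$-module maps by the argument recorded after (vem2) in Subsection~\ref{ss:vemosa}.
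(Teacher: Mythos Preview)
Your approach is correct and matches the paper's: the corollary is stated there without proof, being an immediate consequence of Proposition~\ref{p:vhinvkolmomodule} applied to the kernel $\fk_\phi(a,b)=\phi(a^*b)$ with $\cB$ acting on itself by left multiplication, exactly as you describe and precisely parallel to the derivation of Corollary~\ref{c:stinespring1} from Theorem~\ref{t:vhinvkolmo} in Subsection~\ref{ss:pslm}. Your care in distinguishing the coefficient algebra~$\cA$ from the domain algebra~$\cB$ is well placed; the only detail left implicit (here and in the paper) is that linearity of $V$ and of $\pi$ follows from linearity of $\phi$, e.g.\ via Proposition~\ref{P:vhinvkolmolinear} or by inspection of the minimal reproducing-kernel construction.
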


In case the $*$-algebra $\cB$ is unital, Corollary~\ref{c:stinespringmodule} 
takes a form that reveals the fact that it is actually a non topological 
version of 
Kasparov's Theorem \cite{Kasparov} and its generalisation \cite{Joita}.

\begin{corollary}\label{c:stinespringmodule2} 
Let $\cB$ be a unital $*$-algebra and 
$\phi\colon\cA\ra\cL^*(\cH)$ a linear map, for 
some VE-module $\cH$ over an ordered $*$-algebra $\cA$. 
The following assertions are equivalent:
\begin{itemize}
\item[(i)] $\phi$ is positive semidefinite.
\item[(ii)] There exist a VE-module $\cK$ over $\cA$, a $*$-representation
 $\pi\colon\cB\ra\cL^*(\cK)$, and $W\in\cL^*(\cH,\cK)$ such that 
 \begin{equation}\phi(b)=W^*\pi(b)W,\quad b\in\cB.\end{equation}
\end{itemize}
In addition, if this happens, then the triple $(\cK;\pi;W)$ can always be chosen 
minimal, in the sense that $\cK$ is the linear span of the set 
$\pi(\cA)W\cH$, and any two minimal triples as before are unique, 
modulo unitary equivalence.
\end{corollary}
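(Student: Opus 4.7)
The plan is to reduce the statement to the already-established Corollary~\ref{c:stinespringmodule}, exploiting the unit $e\in\cB$ to collapse the map $V\colon \cB\to\cL^*(\cH,\cK)$ into a single operator $W=V(e)$. The direction (ii)$\Ra$(i) is the routine check: if $\phi(b)=W^*\pi(b)W$ for a $*$-representation $\pi$, then for any $b_1,\ldots,b_n\in\cB$ and $h_1,\ldots,h_n\in\cH$ one has
\begin{equation*}
\sum_{i,j=1}^n[\phi(b_i^*b_j)h_j,h_i]_\cH
=\sum_{i,j=1}^n[(\pi(b_j)Wh_j),(\pi(b_i)Wh_i)]_\cK
=\Bigl[\sum_{j=1}^n\pi(b_j)Wh_j,\sum_{i=1}^n\pi(b_i)Wh_i\Bigr]_\cK\geq 0,
\end{equation*}
using $\pi(b_i^*b_j)=\pi(b_i)^*\pi(b_j)$ and axiom (ve1).

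For (i)$\Ra$(ii), I would apply Corollary~\ref{c:stinespringmodule} to obtain a minimal triple $(\cK;\pi;V)$ with $\phi(a^*b)=V(a)^*V(b)$ and $V(ab)=\pi(a)V(b)$ for all $a,b\in\cB$. Set $W:=V(e)\in\cL^*(\cH,\cK)$. Unitality of $\cB$ combined with the multiplicativity relation gives, for every $b\in\cB$,
\begin{equation*}
V(b)=V(be)=\pi(b)V(e)=\pi(b)W,
\end{equation*}
and substituting $a=e$ in $\phi(a^*b)=V(a)^*V(b)$ yields $\phi(b)=V(e)^*V(b)=W^*\pi(b)W$, which is the desired factorisation. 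In particular, $\lin V(\cB)\cH=\lin\pi(\cB)W\cH$, so the minimality property transported from Corollary~\ref{c:stinespringmodule} is exactly the minimality asserted here. Note also that, on this minimal space, $\pi(e)\pi(b)Wh=\pi(eb)Wh=\pi(b)Wh$, so $\pi(e)$ acts as the identity on $\cK$ and $\pi$ is automatically unital in the minimal case (a fact I will need for uniqueness).

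For the uniqueness clause, suppose $(\cK';\pi';W')$ is another minimal triple realising $\phi(b)=W'^*\pi'(b)W'$ with $\cK'=\lin\pi'(\cB)W'\cH$. Define $V'\colon\cB\to\cL^*(\cH,\cK')$ by $V'(b):=\pi'(b)W'$. A short verification shows $V'(a)^*V'(b)=W'^*\pi'(a^*b)W'=\phi(a^*b)$ and $V'(ab)=\pi'(a)V'(b)$, and $\lin V'(\cB)\cH=\cK'$, so $(\cK';\pi';V')$ is a minimal triple of the form considered in Corollary~\ref{c:stinespringmodule}. Its uniqueness part supplies a unitary $U\colon\cK\to\cK'$ with $U\pi(b)=\pi'(b)U$ and $UV(b)=V'(b)$ for all $b\in\cB$; taking $b=e$ and using that $\pi'$ is unital on $\cK'$ gives $UW=UV(e)=V'(e)=\pi'(e)W'=W'$, completing the equivalence of minimal triples.

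There is no serious obstacle here: the proof is essentially a bookkeeping reduction. The only subtle point, which I would highlight explicitly, is that the representation $\pi$ produced from the module version of Corollary~\ref{c:stinespringmodule} is not \emph{a priori} unital, but that $\pi(e)$ is a selfadjoint idempotent acting as the identity on the minimal subspace $\lin\pi(\cB)W\cH$; this is what makes the unit $e$ legitimately play its role in both the dilation formula and the uniqueness argument.
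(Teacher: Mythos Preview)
Your proof is correct and follows exactly the approach the paper intends: the corollary is stated without proof, as the immediate specialisation of Corollary~\ref{c:stinespringmodule} obtained by setting $W=V(e)$ (just as Corollary~\ref{c:stinespring2} is obtained from Corollary~\ref{c:stinespring1}). Your explicit verification of the uniqueness clause and the observation that $\pi(e)$ acts as the identity on the minimal space are details the paper leaves implicit, but they are precisely what is needed and are handled correctly.
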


\end{document}